\newtheorem{theorem}{Theorem}[section]
\newtheorem{lemma}[theorem]{Lemma}
\newtheorem{proposition}[theorem]{Proposition}
\theoremstyle{definition}
\newtheorem{definition}[theorem]{Definition}
\newtheorem{remark}[theorem]{Remark}
\newcommand{\Z}{\mathbb{Z}}
\newcommand{\R}{\mathbb{R}}
\newcommand{\e}{\varepsilon}
\begin{document}


\title[The intersection polynomials of a long virtual knot II]
{The intersection polynomials of a long virtual knot II: 
Two supporting genera and characterizations}

\author[T. NAKAMURA]{Takuji NAKAMURA}
\address{Faculty of Education, 
University of Yamanashi,
Takeda 4-4-37, Kofu, Yamanashi 400-8510, Japan}
\email{takunakamura@yamanashi.ac.jp}

\author[Y. NAKANISHI]{Yasutaka NAKANISHI}
\address{Department of Mathematics, Kobe University, 
Rokkodai-cho 1-1, Nada-ku, Kobe 657-8501, Japan}
\email{nakanisi@math.kobe-u.ac.jp}

\author[S. SATOH]{Shin SATOH}
\address{Department of Mathematics, Kobe University, 
Rokkodai-cho 1-1, Nada-ku, Kobe 657-8501, Japan}
\email{shin@math.kobe-u.ac.jp}

\author[K. WADA]{Kodai WADA}
\address{Department of Mathematics, Kobe University, 
Rokkodai-cho 1-1, Nada-ku, Kobe 657-8501, Japan}
\email{wada@math.kobe-u.ac.jp}

\makeatletter
\@namedef{subjclassname@2020}{%
  \textup{2020} Mathematics Subject Classification}
\makeatother
\subjclass[2020]{Primary 57K12; Secondary 57K10, 57K14}

\keywords{Long virtual knot, writhe polynomial, 
intersection polynomial, supporting genus, virtual tangle, closure}

\thanks{This work was supported by JSPS KAKENHI Grant Numbers 
JP20K03621, JP22K03287, and JP23K12973.}


\begin{abstract} 
We develop the study of the twelve intersection polynomials of long virtual knots, 
previously introduced in our preceding paper. 
We define two geometric invariants, the $1$- and $2$-supporting genera, 
using two distinct surface realizations. 
These genera yield a natural filtration of the set of long virtual knots, 
and we analyze the behavior of the intersection polynomials 
for long virtual knots with small supporting genera. 
Moreover, we investigate virtual $2$-string tangles, 
analyzing how their sums with long virtual knots affect 
the intersection polynomials through right closures. 
As an application, we provide complete realizability criteria 
for all twelve intersection polynomials.
\end{abstract} 

\maketitle


\section{Introduction}\label{sec1} 

Virtual knot theory, discovered by Kauffman~\cite{Kau}, 
naturally leads to the study of long virtual knots. 
In the preceding paper~\cite{NNSW1}, 
we introduced twelve polynomial invariants 
$F_{ab}(K;t)$, $G_{ab}(K;t)$, and $H_{ab}(K;t)$ 
$(a,b\in\{0,1\})$ of a long virtual knot~$K$, 
collectively called the \emph{intersection polynomials}. 
These invariants are defined via intersection numbers 
of cycles on an oriented closed surface, 
and were shown to be finite-type invariants of degree two 
with respect to crossing changes. 
In~\cite{NNSW1}, we also established their fundamental properties, 
including their behavior under 
symmetries, crossing changes, and concatenation products. 
 
This paper is a sequel to \cite{NNSW1}, 
focusing on the geometric structure and 
realizability of the intersection invariants. 
The first aim of this paper is to introduce 
two geometric invariants of a long virtual knot $K$, 
the \emph{$1$-} and \emph{$2$-supporting genera} 
$sg_{1}(K)$ and $sg_{2}(K)$,  
based on the minimal genus of surface realizations of $K$. 
These genera naturally yield a filtration of the set of long virtual knots 
\[\mathcal{K}_1(0)\subset\mathcal{K}_2(0)\subset
\mathcal{K}_1(1)\subset\mathcal{K}_2(1)\subset
\mathcal{K}_1(2)\subset\cdots, \]
where $\mathcal{K}_i(g)=\{K\mid sg_i(K)\leq g\}$ 
for $i\in\{1,2\}$ and $g\geq 0$. 
We study several properties of the intersection polynomials 
of a long virtual knot with small supporting genera, 
which enable us to prove the strictness of the initial steps of the filtration: 
$\mathcal{K}_1(0)\ne \mathcal{K}_2(0)\ne
\mathcal{K}_1(1)\ne \mathcal{K}_2(1)$. 

The second aim of this paper is to establish a complete characterization 
for each of the twelve intersection polynomials. 
That is, we determine necessary and sufficient conditions 
for a given Laurent polynomial to be realized as each of 
the intersection polynomials of a long virtual knot. 
For example, we prove that a Laurent polynomial $f(t)$ is 
realized as $f(t)=F_{00}(K;t)$ for some long virtual knot $K$ 
if and only if $f(1)=1$ and $f(t)$ is reciprocal. 
We further show that such a knot $K$ can always be chosen 
to satisfy $sg_1(K)\leq 1$.

This paper is organized as follows. 
In Section~\ref{sec2}, 
we recall the definition 
and some fundamental properties of the intersection polynomials 
from~\cite{NNSW1}. 
Section~\ref{sec3} introduces the two supporting genera 
and proves the strictness of the small genus filtration 
(Theorem~\ref{thm38}). 
Sections~\ref{sec4}--\ref{sec6} 
establish essential tools and preliminary characterizations 
needed for the main result. 
Section~\ref{sec4} introduces six polynomial invariants 
of a virtual $2$-string tangle $T$ via its right closure 
and analyzes how the sum $T+K$ affects the intersection polynomials 
(Theorem~\ref{thm43}). 
Section~\ref{sec5} utilizes the right and left closures of $T$ 
to provide a relationship between $F$ and $H$ polynomials 
(Proposition~\ref{prop54}). 
Section~\ref{sec6} is devoted to characterizing  
the writhe polynomial $W_a(K;t)$ under the condition $sg_2(K)=0$ 
(Proposition~\ref{prop62}). 
Finally, in Section~\ref{sec7}, 
we synthesizes these preliminary results to establish complete characterizations of 
the intersection polynomials 
$F_{ab}(K;t)$, $G_{ab}(K;t)$, and $H_{ab}(K;t)$ 
(Theorems~\ref{thm71}, \ref{thm72}, and \ref{thm74}).


\section{Preliminaries}\label{sec2} 

In this section, we review the definitions of 
the writhe and the intersection polynomials, 
together with several of their fundamental properties. 
Refer to \cite{NNSW1} for more details. 

A long virtual knot is presented by a diagram in ${\R}^2$ 
that coincides with the $x$-axis outside a $2$-disk
and is equipped with real and virtual crossings. 
It can also be described by a surface realization 
on an oriented closed surface with a basepoint 
corresponding to $\pm\infty$. 

Let $D$ be a diagram of a long virtual knot $K$, 
$c_1,\dots,c_n$ the real crossings of $D$, 
and $(\Sigma_g,D)$ a surface realization of $D$. 
Here, $\Sigma_g$ denotes an oriented closed surface of genus~$g$. 
For each $i$ with $1\leq i\leq n$, 
traversing $D$ from the basepoint in the positive direction, 
if we pass $c_i$ first over and then under, 
we say that $c_i$ is \emph{of type~$0$}; 
otherwise, it is \emph{of type~$1$}. 
We define two sets by 
\[I_a(D)=\{i\mid \mbox{$c_i$ is of type $a$}\} \ (a=0,1).\]
Smoothing $D$ at $c_i$ yields two cycles on $\Sigma_g$, 
one of which does not contain the basepoint, 
while the other does. 
We denote by $\alpha_i$ the cycle that does not contain the basepoint, 
and by $\beta_i$ the one that does. 

Let $\e_{i}\in\{\pm1\}$ be the sign of $c_{i}$ $(i=1,\dots,n)$. 
For each $a\in\{0,1\}$, the Laurent polynomial 
\[W_a(D;t)=\sum_{i=1}^n 
\varepsilon_i(t^{\alpha_i\cdot\beta_i}-1)\] 
is an invariant of $K$~\cite[Lemma~4.1]{HNNS2}, 
where the dot between cycles denotes 
their intersection number. 
It is called the {\it $a$-writhe polynomial}, 
and is denoted by $W_a(K;t)$. 
Let $\gamma_{D}$ be the cycle on $\Sigma_{g}$ presented by $D$. 
Since $\alpha_{i}+\beta_{i}=\gamma_{D}$ holds for $1\leq i\leq n$, 
we have 
\[
\alpha_{i}\cdot\beta_{i}=\alpha_{i}\cdot(\gamma_{D}-\alpha_{i})
=\alpha_{i}\cdot\gamma_{D}. 
\]
Therefore, in the definition of $W_{a}(K;t)$, 
we may use $\alpha_{i}\cdot\gamma_{D}$ instead of $\alpha_{i}\cdot\beta_{i}$.

For each $a,b\in\{0,1\}$, the Laurent polynomials 
\begin{align*}
F_{ab}(D;t) &=\sum_{i\in I_a(D),\,j\in I_b(D)}
\varepsilon_i\varepsilon_j(t^{\alpha_i\cdot\alpha_j}-1), \\
G_{ab}(D;t) &=\sum_{i\in I_a(D),\,j\in I_b(D)}
\varepsilon_i\varepsilon_j(t^{\alpha_i\cdot\beta_j}-1)
-\omega_b(D)\cdot W_a(K;t), \mbox{ and}\\
H_{ab}(D;t) &=\sum_{i\in I_a(D),\,j\in I_b(D)}
\varepsilon_i\varepsilon_j(t^{\beta_i\cdot\beta_j}-1)
-\omega_a(D)\cdot W_b(K;t)
-\omega_b(D)\cdot W_a(K;t^{-1}) 
\end{align*}
are invariants of $K$~\cite[Theorem~2.3]{NNSW1}, 
which are called the {\it intersection polynomials}, 
and denoted by $F_{ab}(K;t)$, $G_{ab}(K;t)$, 
and $H_{ab}(K;t)$, respectively. 
Here, $\omega_a(D)=\sum_{i\in I_a(D)}\e_i$ 
is the \emph{$a$-writhe} of $D$. 

Any long virtual knot can be presented by a diagram 
with $\omega_0(D)=\omega_1(D)=0$, 
called an \emph{untwisted diagram}. 
For such a diagram, the definitions of $G_{ab}(K;t)$ and $H_{ab}(K;t)$, 
like that of $F_{ab}(K;t)$, take much simple forms. 

A Laurent polynomial $f(t)$ is called \emph{reciprocal} 
if it holds that $f(t)=f(t^{-1})$. 

\begin{lemma}[{\cite[Lemma 2.4 and Theorem 10.3(ii)]{NNSW1}}]\label{lem21}
Any long virtual knot $K$ satisfies the following.
\begin{enumerate}
\setlength{\itemsep}{1mm}
\item 
$F_{01}(K;t)=F_{10}(K;t^{-1})$ and $H_{01}(K;t)=H_{10}(K;t^{-1})$. 
\item 
$F_{00}(K;t)$, $F_{11}(K;t)$, $H_{00}(K;t)$, and $H_{11}(K;t)$ 
are reciprocal. 
\item 
$G_{00}'(K;1)=G_{11}'(K;1)=0$. 
\qed
\end{enumerate}
\end{lemma}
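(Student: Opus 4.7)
The strategy in all three parts is to carry out elementary manipulations at the diagram level using the antisymmetry of the intersection pairing on $\Sigma_g$, the identity $\beta_i=\gamma_D-\alpha_i$, and a relabeling of dummy indices in double sums. Once these ingredients are in place, each claim reduces to bookkeeping.

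For parts (i) and (ii), the plan is to establish the stronger identities $F_{ab}(K;t)=F_{ba}(K;t^{-1})$ and $H_{ab}(K;t)=H_{ba}(K;t^{-1})$ for all $a,b\in\{0,1\}$; specializing to $(a,b)=(0,1)$ gives (i), while specializing to $a=b$ gives (ii). For the $F$-case, in the defining sum of $F_{ba}(K;t^{-1})$ I would swap the dummy indices $i\leftrightarrow j$ and use $\alpha_j\cdot\alpha_i=-\alpha_i\cdot\alpha_j$ to convert $t^{-\alpha_j\cdot\alpha_i}$ into $t^{\alpha_i\cdot\alpha_j}$, giving $F_{ab}(K;t)$ on the nose. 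For the $H$-case, the same index swap handles the double sum via the antisymmetry of $\beta_i\cdot\beta_j$; it remains only to check that the correction terms $-\omega_b(D)W_a(K;t^{-1})-\omega_a(D)W_b(K;t)$ appearing in $H_{ba}(K;t^{-1})$ coincide with $-\omega_a(D)W_b(K;t)-\omega_b(D)W_a(K;t^{-1})$ in $H_{ab}(K;t)$, which is immediate.

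For part (iii), differentiating the defining expression for $G_{aa}(K;t)$ at $t=1$ yields
\[
G_{aa}'(K;1)=\sum_{i,j\in I_a(D)}\varepsilon_i\varepsilon_j(\alpha_i\cdot\beta_j)-\omega_a(D)\cdot W_a'(K;1).
\]
I would then expand $\beta_j=\gamma_D-\alpha_j$ to split the double sum into a $\gamma_D$-part and an $\alpha_j$-part. The $\alpha_j$-part $\sum_{i,j\in I_a}\varepsilon_i\varepsilon_j(\alpha_i\cdot\alpha_j)$ vanishes by antisymmetry, since swapping $i$ and $j$ negates each summand while preserving the $\varepsilon$-coefficients. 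The $\gamma_D$-part factors as $\omega_a(D)\cdot\sum_{i\in I_a}\varepsilon_i(\alpha_i\cdot\gamma_D)$, and by the identity $\alpha_i\cdot\beta_i=\alpha_i\cdot\gamma_D$ recalled in the text, this equals $\omega_a(D)\cdot W_a'(K;1)$, exactly cancelling the second term.

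The main obstacle I expect is keeping the order of arguments in the correction terms of $H$ and $G$ straight while tracking which variable carries $t$ and which carries $t^{-1}$. A secondary subtlety in (iii) is noticing that the vanishing of the antisymmetric sum relies essentially on $i$ and $j$ ranging over the \emph{same} index set $I_a$; this is precisely why the identity holds for the diagonal polynomials $G_{00}$ and $G_{11}$ but fails, in general, for the off-diagonal ones $G_{01}$ and $G_{10}$.
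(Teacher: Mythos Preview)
Your argument is correct. Note that the present paper does not actually supply a proof of this lemma: it is quoted from the companion paper \cite{NNSW1} and marked with a \qed, so there is no ``paper's own proof'' to compare against here. What you have written is precisely the natural diagram-level verification one would expect in that reference: the index swap together with the antisymmetry $\alpha_i\cdot\alpha_j=-\alpha_j\cdot\alpha_i$ (resp.\ $\beta_i\cdot\beta_j=-\beta_j\cdot\beta_i$) yields the general identity $F_{ab}(K;t)=F_{ba}(K;t^{-1})$ (resp.\ $H_{ab}(K;t)=H_{ba}(K;t^{-1})$), from which (i) and (ii) follow; and for (iii) the splitting $\beta_j=\gamma_D-\alpha_j$ together with $\alpha_i\cdot\gamma_D=\alpha_i\cdot\beta_i$ reduces $G_{aa}'(K;1)$ to the difference of two equal quantities. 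Your remark that the cancellation in (iii) depends on $i$ and $j$ ranging over the same set $I_a$ is exactly the point. One small caveat: in reading off $W_a'(K;1)=\sum_{i\in I_a(D)}\varepsilon_i(\alpha_i\cdot\beta_i)$ you are implicitly using that the sum defining $W_a$ runs over $I_a(D)$ rather than over all crossings; the displayed formula in Section~\ref{sec2} writes $\sum_{i=1}^n$, but the intended restriction to $I_a(D)$ is clear from context (e.g.\ Lemma~\ref{lem42}).
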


For a diagram $D$ of $K$, 
we define three diagrams $D^{\#}$, $-D$, and $D^{*}$ as follows: 
\begin{itemize}
\item 
$D^{\#}$ is obtained from $D$ by switching the over/under information 
at each real crossing of $D$. 
\item 
$-D$ is obtained from $D$ by reversing the orientation of $D$. 
\item 
$D^{*}$ is obtained from $D$ by an orientation-reversing homeomorphism of $\R^{2}$ 
(or $\Sigma_{g}$). 
\end{itemize}
Let $K^{\#}$, $-K$, and $K^{*}$ denote the long virtual knots 
presented by $D^{\#}$, $-D$, and $D^{*}$, respectively. 
 
\begin{lemma}[{\cite[Theorem 5.1]{NNSW1}}]\label{lem22}
For any $a\in\{0,1\}$, the following hold.
\begin{enumerate}
\setlength{\itemsep}{1mm}
\item 
$W_a(K^\#;t)=-W_{1-a}(K;t)$. 
\item 
$W_a({-K};t)=W_{1-a}(K;t)$. 
\item 
$W_a(K^*;t)=-W_a(K;t^{-1})$. 
\qed
\end{enumerate}
\end{lemma}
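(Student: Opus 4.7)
The plan is to read off all three identities from the diagrammatic definition $W_a(D;t)=\sum_{i\in I_a(D)}\varepsilon_i(t^{\alpha_i\cdot\beta_i}-1)$ by tracking separately how each of the three ingredients -- the sign $\varepsilon_i$, the index set $I_a(D)$, and the intersection number $\alpha_i\cdot\beta_i$ -- transforms under each of the operations $D\mapsto D^{\#}$, $D\mapsto -D$, and $D\mapsto D^{*}$. Once these elementary bookkeeping computations are in place, each identity follows by direct substitution.

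For (i), switching every crossing negates each sign, so $\varepsilon_i\mapsto -\varepsilon_i$, and swaps the over/under order at each crossing, so type $a$ becomes type $1-a$ and hence $I_a(D^{\#})=I_{1-a}(D)$; the oriented smoothings, and therefore the cycles $\alpha_i$ and $\beta_i$, are untouched by a crossing change. Combining these effects yields $W_a(K^{\#};t)=-W_{1-a}(K;t)$. For (ii), reversing the orientation of $D$ preserves each sign (both strands flip together), swaps the type of each crossing (we now meet the two branches at $c_i$ in the reversed order), and reverses both $\alpha_i$ and $\beta_i$, so $\alpha_i\cdot\beta_i$ is unchanged by bilinearity; only the index shift survives, giving $W_a(-K;t)=W_{1-a}(K;t)$.

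For (iii), an orientation-reversing homeomorphism of $\R^{2}$ (equivalently of $\Sigma_{g}$) negates every crossing sign and negates the intersection form on $\Sigma_{g}$, so $\varepsilon_i\mapsto -\varepsilon_i$ and $\alpha_i\cdot\beta_i\mapsto -\alpha_i\cdot\beta_i$, while the orientation of the underlying curve, and hence the type of each crossing, is preserved, so $I_a(D^{*})=I_a(D)$. Using $t^{-\alpha_i\cdot\beta_i}-1=(t^{-1})^{\alpha_i\cdot\beta_i}-1$ then converts the two sign changes into $W_a(K^{*};t)=-W_a(K;t^{-1})$. The only point that requires care is aligning the sign and type conventions of Section~\ref{sec2} across the three operations -- in particular, verifying that both the crossing change and the orientation reversal flip the type as claimed, while the mirror operation does not -- but these are immediate from tracing the definition of type (``first over then under'') against the modified orientation or over/under data.
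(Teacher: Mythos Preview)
Your proof is correct. The paper itself does not prove this lemma---it is quoted with a \qed as Theorem~5.1 of the companion paper \cite{NNSW1}---so there is no in-paper argument to compare against. Your direct verification, tracking how each of $\varepsilon_i$, $I_a(D)$, and $\alpha_i\cdot\beta_i$ transforms under the three operations, is exactly the standard argument and is carried out correctly.
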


\begin{lemma}[{\cite[Theorem 5.2]{NNSW1}}]\label{lem23}
For any $X\in\{F,G,H\}$ and 
$a,b\in\{0,1\}$, the following hold.
\begin{enumerate}
\setlength{\itemsep}{1mm}
\item 
$X_{ab}({K^\#}; t)=X_{ab}(-K;t)=X_{1-a,1-b}(K;t)$. 
\item 
$X_{ab}({K^*};t)=X_{ab}(K;t^{-1})$.
\qed
\end{enumerate}
\end{lemma}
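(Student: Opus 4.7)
The plan is to trace how each ingredient in the definitions of $F_{ab}(K;t)$, $G_{ab}(K;t)$, and $H_{ab}(K;t)$ transforms under each of the three operations $D\mapsto D^\#$, $D\mapsto -D$, and $D\mapsto D^*$, and then substitute these transformations into the defining sums. The ingredients inside the double sum are the crossing sign $\varepsilon_i$, the type set $I_a(D)$, and the intersection numbers $\alpha_i\cdot\alpha_j$, $\alpha_i\cdot\beta_j$, $\beta_i\cdot\beta_j$ on $\Sigma_g$; for the correction terms in $G$ and $H$ one additionally needs $\omega_a(D)$ together with the writhe polynomials, whose behavior is already recorded in Lemma~\ref{lem22}.

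For part~(i), I will verify that under $D\mapsto D^\#$ the sign and type at each crossing flip simultaneously, giving $\varepsilon_i^\#=-\varepsilon_i$ and $I_a(D^\#)=I_{1-a}(D)$ (the strand met first changes from over to under), while the oriented smoothings depend only on the local orientations, so $\alpha_i$ and $\beta_i$ are unchanged. Under $D\mapsto -D$, reversing the parameterization swaps the two visits to each crossing, again yielding $I_a(-D)=I_{1-a}(D)$; the sign is preserved because reversing both tangent vectors preserves their ordered orientation; and each cycle is reversed, so every bilinear intersection number is invariant. In both cases $\varepsilon_i\varepsilon_j$ is unchanged, and the double sum over $I_a(D^\#)\times I_b(D^\#)=I_{1-a}(D)\times I_{1-b}(D)$ is precisely the defining sum for the $(1-a,1-b)$ polynomial, giving $F_{ab}(K^\#;t)=F_{ab}(-K;t)=F_{1-a,1-b}(K;t)$ at once. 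For $G$ and $H$, I will combine the analogous identities $\omega_b(D^\#)=-\omega_{1-b}(D)$ and $\omega_b(-D)=\omega_{1-b}(D)$ with Lemma~\ref{lem22}(i),(ii) for $W_a$ to check that each correction term $\omega\cdot W$ matches the corresponding term in $X_{1-a,1-b}(K;t)$.

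For part~(ii), the orientation-reversing homeomorphism fixes the over/under relation and the traversal order, so types are preserved, but it flips every crossing sign ($\varepsilon_i^*=-\varepsilon_i$) because of the reversed ambient orientation, and negates every surface intersection number. Hence $\varepsilon_i^*\varepsilon_j^*=\varepsilon_i\varepsilon_j$ while each exponent of $t$ is negated, so every summand $\varepsilon_i\varepsilon_j(t^{\alpha_i\cdot\beta_j}-1)$ becomes $\varepsilon_i\varepsilon_j((t^{-1})^{\alpha_i\cdot\beta_j}-1)$, producing $F_{ab}(K^*;t)=F_{ab}(K;t^{-1})$ immediately. For the correction terms in $G$ and $H$, the identities $\omega_a(D^*)=-\omega_a(D)$ and $W_a(K^*;t)=-W_a(K;t^{-1})$ from Lemma~\ref{lem22}(iii) give a cancellation of two minus signs, while the built-in $t\mapsto t^{-1}$ in the $W$-identity lines up with the overall substitution $t\mapsto t^{-1}$, yielding $X_{ab}(K;t^{-1})$ exactly. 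The main obstacle is purely bookkeeping: one must keep the compound sign flips on $\omega$, $W$, and the intersection-number exponents synchronized across all three choices of $X$, particularly for $H$, where the $W_a(K;t^{-1})$ term interacts nontrivially with the substitution. No new geometric input is required, since all three operations act linearly on the defining sums and the needed writhe-polynomial identities are already available from Lemma~\ref{lem22}.
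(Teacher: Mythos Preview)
Your argument is correct. Note, however, that the paper itself does not prove this lemma: it is quoted from the companion paper \cite{NNSW1} and marked with a \qed, so there is no in-paper proof to compare against. Your direct bookkeeping verification---tracking how $\varepsilon_i$, $I_a(D)$, $\alpha_i$, $\beta_i$, $\omega_a(D)$, and $W_a(K;t)$ transform under each of $D\mapsto D^\#$, $-D$, $D^*$ and substituting into the defining sums---is exactly the expected approach and is presumably what \cite[Theorem~5.2]{NNSW1} does. The one place worth writing out explicitly is the $H_{ab}$ correction under $D\mapsto D^*$: there $W_a(K^*;t^{-1})=-W_a(K;t)$ (apply Lemma~\ref{lem22}(iii) with $t$ replaced by $t^{-1}$), so the term $-\omega_b(D^*)W_a(K^*;t^{-1})$ becomes $-\omega_b(D)W_a(K;t)$, which is precisely the corresponding correction in $H_{ab}(K;t^{-1})$. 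You allude to this (``interacts nontrivially with the substitution'') but do not display it; since it is the only step where two $t\mapsto t^{-1}$ substitutions compose, it is worth making explicit.
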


Let $D$ and $D'$ be diagrams of long virtual knots $K$ and $K'$, respectively. 
We denote by $D\circ D'$ the diagram obtained by concatenating $D'$ after $D$. 
The \emph{product} of $K$ and $K'$ is defined as 
the long virtual knot presented by $D\circ D'$, 
and is denoted by $K\circ K'$. 

\begin{lemma}[{\cite[Theorems 6.1 and 6.2]{NNSW1}}]\label{lem24}
For any $a,b\in\{0,1\}$, the following hold.
\begin{enumerate}
\setlength{\itemsep}{1mm}
\item 
$W_a(K\circ K';t)=W_a(K;t)+W_a(K';t)$. 
\item 
$X_{ab}(K\circ K';t)=X_{ab}(K;t)+X_{ab}(K';t)$ for any $X\in\{F,G\}$. 
\item
$H_{ab}(K\circ K';t)=H_{ab}(K;t)+H_{ab}(K';t)$\\
\phantom{$H_{ab}(K\circ K';t)=$}
$+W_a(K;t^{-1})W_b(K';t)+W_a(K';t^{-1})W_b(K;t)$. 
\qed
\end{enumerate}
\end{lemma}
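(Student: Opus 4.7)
The plan is to work throughout with surface realizations. If $(\Sigma_g, D)$ and $(\Sigma_{g'}, D')$ are realizations with basepoints $p$ and $p'$, then $D \circ D'$ is naturally realized on the connect sum $\Sigma_{g+g'}$ obtained by joining the two surfaces along arcs near $p$ and $p'$. The real crossings of $D \circ D'$ are the disjoint union of those of $D$ and $D'$, and their signs and types are preserved. For a crossing $c_i$ coming from $D$, writing $\alpha_i^D$ and $\beta_i^D$ for the cycles obtained by smoothing in $D$ alone, the cycle $\alpha_i^D$ still avoids the basepoint in $D \circ D'$, while $\alpha_i + \beta_i = \gamma_{D \circ D'} = \gamma_D + \gamma_{D'}$ forces $\beta_i = \beta_i^D + \gamma_{D'}$. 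Symmetric identities hold for crossings originating in $D'$.

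Next, I would record the intersection numbers on $\Sigma_{g+g'}$. Cycles lying in different connect summands intersect trivially; combined with $\alpha \cdot \alpha = 0$ by antisymmetry and $\alpha_i \cdot \gamma_D = \alpha_i \cdot \beta_i^D$, a short calculation gives, for $i$ from $D$ and $j$ from $D'$,
\[
\alpha_i \cdot \alpha_j = 0, \qquad
\alpha_i \cdot \beta_j = \alpha_i^D \cdot \beta_i^D, \qquad
\beta_i \cdot \beta_j = \alpha_j^{D'} \cdot \beta_j^{D'} - \alpha_i^D \cdot \beta_i^D,
\]
with symmetric formulas when the roles of $D$ and $D'$ are swapped. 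When both crossings lie in $D$ (or both in $D'$), the intersection numbers coincide with those computed in the original diagram, since the extra $\gamma_{D'}$ summands contribute nothing.

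Parts (i) and (ii) now follow routinely. Partitioning the crossings of $D \circ D'$ by origin and observing that $\alpha_i \cdot \beta_i$ is unchanged within each block immediately yields (i). For $F_{ab}$, the cross terms have vanishing exponent and contribute nothing, giving additivity. For $G_{ab}$, the cross-term contribution to the raw double sum collapses to $\omega_b(D') W_a(K;t) + \omega_b(D) W_a(K';t)$, and combining this with the same-side contributions one sees that it is precisely absorbed by the normalizing term $\omega_b(D \circ D') W_a(K \circ K';t)$, proving (ii).

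The main obstacle is (iii), where the antisymmetry of the intersection form produces genuinely new cross contributions. Using the $\beta \cdot \beta$ formula above, the cross terms with $i \in I_a(D)$ and $j \in I_b(D')$ factor as
\[
\Bigl(\sum_{i \in I_a(D)} \e_i\, t^{-\alpha_i^D \cdot \beta_i^D}\Bigr)
\Bigl(\sum_{j \in I_b(D')} \e_j\, t^{\alpha_j^{D'} \cdot \beta_j^{D'}}\Bigr) - \omega_a(D)\omega_b(D'),
\]
and the two inner sums evaluate to $W_a(K;t^{-1}) + \omega_a(D)$ and $W_b(K';t) + \omega_b(D')$ respectively. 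Adding the symmetric contribution from $i \in I_a(D')$, $j \in I_b(D)$, the pure $\omega$--writhe products cancel exactly against the normalizing terms $\omega_a(D \circ D') W_b(K \circ K';t)$ and $\omega_b(D \circ D') W_a(K \circ K';t^{-1})$, while the genuinely mixed products $W_a(K;t^{-1})W_b(K';t) + W_a(K';t^{-1})W_b(K;t)$ survive as the extra term in (iii).
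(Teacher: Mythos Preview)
Your proof is correct. Note, however, that the paper does not prove this lemma at all: it is quoted from the companion paper \cite{NNSW1} (Theorems~6.1 and~6.2) and marked with a \qed. That said, your argument is exactly the natural one, and it is the same strategy the present paper employs for the closely related Theorem~\ref{thm43} (the sum $T+K$): form the connected-sum surface realization, compute how the cycles $\alpha_i,\beta_i$ at each crossing decompose (here $\alpha_i=\alpha_i^D$, $\beta_i=\beta_i^D+\gamma_{D'}$ for $i$ in $D$, and symmetrically for $D'$), tabulate the resulting intersection numbers, and then expand the double sums blockwise. Your bookkeeping for the normalizing $\omega$-terms in $G_{ab}$ and $H_{ab}$ is accurate, and the survival of the two mixed products $W_a(K;t^{-1})W_b(K';t)+W_a(K';t^{-1})W_b(K;t)$ in part~(iii) is handled correctly.
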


The writhe polynomial, defined independently in~\cite{CG,Kau2013,ST}, 
and the three intersection polynomials introduced in \cite{HNNS1} 
are polynomial invariants of (closed) virtual knots. 
For a long virtual knot $K$, let $\widehat{K}$ denote its closure. 
Then the writhe polynomial $W(\widehat{K};t)$ 
and the first intersection polynomial $I(\widehat{K};t)$ 
can be expressed in terms of those of $K$ as follows.

\begin{lemma}[{\cite[Proposition~10.1]{NNSW1}}]\label{lem25} 
Any long virtual knot $K$ satisfies the following. 
\begin{enumerate}
\setlength{\itemsep}{1mm}
\item
$W(\widehat{K};t)=W_0(K;t)+W_1(K;t^{-1})$. 
\item
$I(\widehat{K};t)=F_{01}(K;t)+G_{00}(K;t)+G_{11}(K;t^{-1})+H_{01}(K;t^{-1})$. 
\qed
\end{enumerate}
\end{lemma}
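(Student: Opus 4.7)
My plan is to verify both identities by unfolding the definitions of the closed-knot invariants and performing a case analysis on crossings by type, leveraging the fact that the closure operation on a long virtual knot does not alter the surface $\Sigma_g$ or the local smoothing picture near any real crossing.

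For (i), the writhe polynomial of a closed virtual knot on $\Sigma_g$ is $W(\widehat K;t)=\sum_i\varepsilon_i(t^{n_i}-1)$, where $n_i=\hat\alpha_i\cdot\hat\beta_i$ is the index computed from the two smoothing cycles at $c_i$ under a fixed orientation convention (for instance, $\hat\alpha_i$ is the cycle carrying the out-going under-strand). Since closing $K$ merely erases the basepoint, the unordered pair $\{\hat\alpha_i,\hat\beta_i\}$ coincides with $\{\alpha_i,\beta_i\}$. The labelling, however, depends on type: at a crossing in $I_0(D)$ the basepoint sits on the strand that becomes $\hat\beta_i$, so $\hat\alpha_i=\alpha_i$ and $n_i=\alpha_i\cdot\beta_i$; at a crossing in $I_1(D)$ the roles are swapped, giving $n_i=\beta_i\cdot\alpha_i=-\alpha_i\cdot\beta_i$. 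Splitting the sum according to $I_0(D)\sqcup I_1(D)$ and using $t^{-\alpha_i\cdot\beta_i}-1=(t^{\alpha_i\cdot\beta_i}-1)\big|_{t\mapsto t^{-1}}$ yields $W(\widehat K;t)=W_0(K;t)+W_1(K;t^{-1})$.

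For (ii), I would run the same type-wise analysis on ordered pairs $(c_i,c_j)$. The closed invariant $I(\widehat K;t)$ is a double sum $\sum_{i,j}\varepsilon_i\varepsilon_j(t^{m_{ij}}-1)$ corrected by a total-writhe term $-w(\widehat K)\cdot W(\widehat K;t)$, with $m_{ij}$ the intersection number of a designated pair of closed smoothing cycles. Partitioning by the type pair $(a,b)$ of $(c_i,c_j)$ and applying the cycle identifications from (i) to each slot independently, the four resulting main sums should match exactly the four main sums of $F_{01}(K;t)$, $G_{00}(K;t)$, $G_{11}(K;t^{-1})$, and $H_{01}(K;t^{-1})$, with the factors of $t^{-1}$ appearing precisely when a type-$1$ flip swaps the ordered cycle pair and thereby negates the intersection pairing.

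The main obstacle is the bookkeeping of the correction terms, since $G_{ab}$ and $H_{ab}$ each contribute their own $\omega\cdot W$ subtractions that must collapse to the single correction of $I(\widehat K;t)$. Collecting the corrections from the four summands of the RHS produces
\[
-\omega_0 W_0(K;t)-\omega_1 W_1(K;t^{-1})-\omega_0 W_1(K;t^{-1})-\omega_1 W_0(K;t)=-(\omega_0+\omega_1)\bigl(W_0(K;t)+W_1(K;t^{-1})\bigr),
\]
which, by (i) and the identity $w(\widehat K)=\omega_0(D)+\omega_1(D)$, equals $-w(\widehat K)\cdot W(\widehat K;t)$ exactly. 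This is a careful but mechanical algebraic verification rather than a conceptual hurdle; combined with the case analysis of the main sums it completes the proof.
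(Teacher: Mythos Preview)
The paper does not prove this lemma; it is quoted with a \qed from \cite[Proposition~10.1]{NNSW1}. Your proposal is the natural direct verification and is correct: the key observation that closure swaps $\alpha_i\leftrightarrow\beta_i$ exactly at type-$1$ crossings is right, and your bookkeeping of the correction terms in (ii) is accurate. To make (ii) watertight you should spell out the four-block identification explicitly rather than writing ``should match'': with the closed-knot convention $m_{ij}=\hat\alpha_i\cdot\hat\beta_j$, the $(0,0)$-block gives the main sum of $G_{00}(K;t)$, the $(0,1)$-block gives that of $F_{01}(K;t)$, the $(1,1)$-block gives that of $G_{11}(K;t^{-1})$ after relabelling $i\leftrightarrow j$ and using antisymmetry, and the $(1,0)$-block gives that of $H_{10}(K;t)=H_{01}(K;t^{-1})$ (Lemma~\ref{lem21}(i)); this is straightforward but worth writing out, since the pairing of blocks with $F,G,H$-labels is not symmetric in the obvious way.
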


\section{Two supporting genera}\label{sec3}

The supporting genus of a virtual knot $\kappa$ 
is one of the important notions in virtual knot theory. 
It is defined as the minimal genus for all surface realizations of $\kappa$, 
and is denoted by $sg(\kappa)$. 
If $sg(\kappa)\leq 1$, that is, 
$\kappa$ is presented by a diagram on the torus $\Sigma_{1}$, 
then the following hold. 

\begin{lemma}[\cite{Higa, HNNS4}]\label{lem31} 
If $sg(\kappa)\leq 1$ holds, then $W(\kappa;t)$ and $I(\kappa;t)$ are reciprocal. 
\qed
\end{lemma}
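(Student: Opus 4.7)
My plan is to work directly with a surface realization of $\kappa$ on the torus $\Sigma_{1}$, exploiting the rank-two structure of $H_{1}(\Sigma_{1};\mathbb{Z})\cong\mathbb{Z}^{2}$ together with its symplectic intersection form. Let $D$ be a diagram of $\kappa$ on $\Sigma_{1}$ and $\gamma_{D}\in H_{1}(\Sigma_{1};\mathbb{Z})$ its homology class; at each real crossing $c_{i}$, smoothing yields cycles $\alpha_{i},\beta_{i}$ with $\alpha_{i}+\beta_{i}=\gamma_{D}$, so that $n_{i}:=\alpha_{i}\cdot\beta_{i}=\alpha_{i}\cdot\gamma_{D}=-\beta_{i}\cdot\gamma_{D}$.

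First I would dispose of the case $\gamma_{D}=0$: every $n_{i}$ vanishes, so $W(\kappa;t)\equiv 0$ is trivially reciprocal, and a parallel homological inspection of the $\alpha_{i}\cdot\alpha_{j}$ handles $I(\kappa;t)$. Assuming $\gamma_{D}\neq 0$, an $\mathrm{SL}_{2}(\mathbb{Z})$ self-homeomorphism of $\Sigma_{1}$ normalizes $\gamma_{D}=(d,0)$ for some $d\geq 1$; writing $\alpha_{i}=(a_{i},b_{i})$ in this basis gives $n_{i}=-b_{i}d$. The core step is then to establish
\[
\sum_{i:\,n_{i}=n}\varepsilon_{i}\;=\;\sum_{i:\,n_{i}=-n}\varepsilon_{i}\qquad(n\in\mathbb{Z}),
\]
which is equivalent to $W(\kappa;t)=W(\kappa;t^{-1})$. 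I would derive this symmetry from a sign-preserving, index-negating bijection on the crossing set, induced by a geometric involution of $\Sigma_{1}$ that fixes $\gamma_{D}$ as an oriented cycle but reverses the transverse coordinate. The same involution negates every $\alpha_{i}\cdot\alpha_{j}$, which yields reciprocity of $I(\kappa;t)$ by the analogous pairing argument over ordered pairs $(i,j)$.

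The principal obstacle I anticipate is producing this involution so that it preserves crossing signs rather than reversing them (reversal would force $W(\kappa;t)=-W(\kappa;t^{-1})$ instead of $W(\kappa;t)=W(\kappa;t^{-1})$, collapsing $W$ to an antireciprocal polynomial). The rank-two hypothesis is essential here: on higher-genus surfaces there are additional transverse directions along which no comparable symmetry exists, and indeed $W(\kappa;t)$ and $I(\kappa;t)$ can fail to be reciprocal once $sg(\kappa)\geq 2$. If the direct geometric route stalls, a combinatorial fallback is to encode $\kappa$ via a Gauss diagram on its base circle whose chords are labeled by the $n_{i}$, and to verify vanishing of all odd moments $\sum_{i}\varepsilon_{i}n_{i}^{2k+1}=0$ directly; the case $k=0$ is already known since $W'(\kappa;1)=0$, and the rank-two constraint on $\alpha_{i}\in H_{1}(\Sigma_{1})$ should force the higher-moment identities automatically.
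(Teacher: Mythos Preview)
The paper does not supply its own proof of this lemma; it is quoted from \cite{Higa,HNNS4} and closed with a \qed. So there is no in-paper argument to compare against, and I assess your proposal on its own terms.

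Your proposal has a genuine gap at its central step. The involution of $\Sigma_{1}$ you invoke---fixing $\gamma_{D}=(d,0)$ and reversing the transverse coordinate---is orientation-reversing. Applied to a diagram $D$ of $\kappa$ it produces a diagram of the mirror $\kappa^{*}$, reversing every crossing sign and negating every intersection number; what one recovers is the general identity $W(\kappa^{*};t)=-W(\kappa;t^{-1})$, valid for \emph{all} virtual knots irrespective of supporting genus (compare Lemma~\ref{lem22}(iii) for the long-knot analogue). It does not yield a bijection on the crossing set of a fixed diagram of $\kappa$, since there is no reason for the image diagram to represent $\kappa$ rather than $\kappa^{*}$. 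Nor can this be repaired by choosing a different symmetry: any $A\in\mathrm{GL}_{2}(\Z)$ fixing $(1,0)$ and negating the second coordinate has determinant $-1$, so no orientation-preserving self-homeomorphism of $\Sigma_{1}$ realizes the ``sign-preserving, index-negating'' bijection you seek. You correctly flag this as the principal obstacle, but the proposal does not resolve it.

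The fallback is likewise incomplete. Vanishing of all odd moments $\sum_{i}\varepsilon_{i}n_{i}^{2k+1}$ is indeed equivalent to reciprocity of $W$, but the case $k=0$ holds for every virtual knot and thus carries no genus-one information, and you give no mechanism by which the rank-two constraint on $H_{1}(\Sigma_{1})$ forces the higher moments to vanish. The actual arguments in \cite{Higa,HNNS4} do not proceed via a global surface symmetry; you would need to consult those references for a working approach.
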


In this section, we introduce two genera of a long virtual knot. 
The first one is defined via a surface realization as follows. 

\begin{definition}\label{def32}
For a long virtual knot $K$, 
\[sg_1(K)=\min\{g\mid \mbox{$(\Sigma_g,D)$ is 
a surface realization of $K$}\}\] 
is called the {\it $1$-supporting genus} of $K$. 
\end{definition}

By definition, $sg_{1}(\widehat{K})\leq sg_{1}(K)$ holds 
for any long virtual knot $K$. 

\begin{lemma}\label{lem33}
If $sg_1(K)=0$ holds, 
then we have the following. 
\begin{enumerate}
\setlength{\itemsep}{1mm}
\item
$W_a(K;t)=0$ for any $a\in\{0,1\}$. 
\item
$X_{ab}(K;t)=0$ 
for any $X\in\{F,G,H\}$ and $a,b\in\{0,1\}$. 
\end{enumerate}
\end{lemma}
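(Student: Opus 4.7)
The plan is to exploit the fact that if $sg_1(K)=0$, then $K$ admits a surface realization $(\Sigma_0, D)$ on the $2$-sphere, where $H_1(\Sigma_0)=0$. In particular, every cycle on $\Sigma_0$ is null-homologous, so the intersection pairing of any two cycles vanishes. This should make all of the exponents appearing in the defining sums equal to zero.

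First, I would observe that for every real crossing $c_i$ of $D$, the cycles $\alpha_i$ and $\beta_i$ on $\Sigma_0$ satisfy $\alpha_i\cdot\beta_i=0$, and likewise $\alpha_i\cdot\alpha_j=\alpha_i\cdot\beta_j=\beta_i\cdot\beta_j=0$ for all $i,j$, since intersection numbers depend only on homology classes in $H_1(\Sigma_g)$ and $H_1(\Sigma_0)=0$. Substituting these vanishings into the definition
\[
W_a(D;t)=\sum_{i=1}^n\varepsilon_i(t^{\alpha_i\cdot\beta_i}-1)
\]
immediately gives each summand $\varepsilon_i(t^0-1)=0$, proving (i). The same substitution in
\[
F_{ab}(D;t)=\sum_{i\in I_a(D),\,j\in I_b(D)}\varepsilon_i\varepsilon_j(t^{\alpha_i\cdot\alpha_j}-1)
\]
yields $F_{ab}(K;t)=0$.

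For $G_{ab}$ and $H_{ab}$, the same argument makes the double-sum portion vanish, leaving only the correction terms involving $W_a(K;t)$ and $W_b(K;t)$ (and their $t\mapsto t^{-1}$ variants). But by (i), all of these writhe polynomials are zero, so $G_{ab}(K;t)=0$ and $H_{ab}(K;t)=0$, completing (ii).

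The proof is essentially routine once the homological observation is made; the only subtlety to confirm is that the cycles $\alpha_i, \beta_i$ appearing in the definitions are the same combinatorial objects regardless of which surface realization is used, so working with the chosen genus-$0$ realization is legitimate. There is no real obstacle here, but one should explicitly note that the invariants are well-defined (as recalled in Section~\ref{sec2}) so that computing them on any single realization suffices.
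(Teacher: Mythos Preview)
Your proof is correct and follows exactly the same idea as the paper's own proof: since $sg_1(K)=0$ gives a surface realization on the $2$-sphere $\Sigma_0$, every intersection number of cycles vanishes, forcing all summands (and hence all correction terms) to be zero. The paper compresses this into a single sentence, while you spell out the handling of the $W_a$-correction terms in $G_{ab}$ and $H_{ab}$ explicitly, but the argument is the same.
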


\begin{proof}
Since $K$ is presented by a surface realization on the $2$-sphere $\Sigma_{0}$, 
the intersection number of any pair of cycles vanishes. 
Therefore, the conclusion follows. 
\end{proof}

We remark that $sg_1(K)=0$ if and only if $K$ is a long classical knot. 

\begin{lemma}\label{lem34}
If $sg_1(K)\leq 1$ holds, 
then we have the following. 
\begin{enumerate}
\setlength{\itemsep}{1mm}
\item
$W_0(K;t)-W_1(K;t)$ is reciprocal.
\item
$F_{01}(K;t)+G_{00}(K;t)-G_{11}(K;t)-H_{01}(K;t)$ 
is reciprocal. 
\item
$G_{00}(K;t)-G_{01}(K;t)-G_{10}(K;t)+G_{11}(K;t)$ 
is reciprocal. 
\end{enumerate}
\end{lemma}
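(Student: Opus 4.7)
The plan is to deduce all three reciprocity statements from Lemma~\ref{lem31} applied to the closure $\widehat K$. Indeed, any surface realization of $K$ on $\Sigma_g$ closes up to a surface realization of $\widehat K$ on the same $\Sigma_g$, so $sg(\widehat K)\le sg_1(K)\le 1$, and Lemma~\ref{lem31} gives that both $W(\widehat K;t)$ and $I(\widehat K;t)$ are reciprocal.

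For (i), I would substitute the formula of Lemma~\ref{lem25}(i) into $W(\widehat K;t)=W(\widehat K;t^{-1})$; the resulting equation $W_0(K;t)+W_1(K;t^{-1})=W_0(K;t^{-1})+W_1(K;t)$ is exactly the reciprocity of $W_0(K;t)-W_1(K;t)$. For (ii), I would equate the expressions supplied by Lemma~\ref{lem25}(ii) for $I(\widehat K;t)$ and $I(\widehat K;t^{-1})$ and collect the $t$- and $t^{-1}$-arguments of each polynomial on opposite sides, yielding analogously the reciprocity of $F_{01}+G_{00}-G_{11}-H_{01}$.

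The main step is part (iii), and the key idea is to apply part (ii) (just proved) to an auxiliary long virtual knot. Let $K'$ denote the long virtual knot obtained from $K$ by performing a crossing change at every type-$1$ crossing. Crossing changes preserve the underlying shadow and hence the ambient surface realization, so $sg_1(K')=sg_1(K)\le 1$ and part (ii) applies to $K'$. The crossing changes have three effects: the sign flips at each originally type-$1$ crossing, giving $\varepsilon'_i=\sigma_i\varepsilon_i$ with $\sigma_i=+1$ for $i\in I_0(K)$ and $\sigma_i=-1$ for $i\in I_1(K)$; each type-$1$ crossing becomes type-$0$, so $I_1(K')=\emptyset$; and the smoothing cycles $\alpha_i,\beta_i$ are unchanged. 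In particular $F_{01}(K';t)=G_{11}(K';t)=H_{01}(K';t)=0$, so part (ii) applied to $K'$ reduces to the claim that $G_{00}(K';t)$ itself is reciprocal.

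It then remains to verify the identity $G_{00}(K';t)=G_{00}(K;t)-G_{01}(K;t)-G_{10}(K;t)+G_{11}(K;t)$, which is a routine computation from the definition: the bilinear part of $G_{00}(K';t)$ expands to $\sum_{i,j}\sigma_i\sigma_j\varepsilon_i\varepsilon_j(t^{\alpha_i\cdot\beta_j}-1)$ and splits by type into the bilinear parts of the four $G_{ab}(K;t)$ with signs $+,-,-,+$, while $\omega_0(K')=\omega_0(K)-\omega_1(K)$ and $W_0(K';t)=W_0(K;t)-W_1(K;t)$ ensure that the twist correction $-\omega_0(K')W_0(K';t)$ matches the combined $\omega W$-corrections on the right-hand side with matching signs. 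The hard part is spotting this reduction: namely, the observation that the alternating combination of the four $G_{ab}$-polynomials is itself the $G_{00}$-polynomial of a crossing-changed knot in which every crossing is of type~$0$, which then allows part (ii) to be invoked a second time.
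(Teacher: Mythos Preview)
Your proof is correct and follows essentially the same route as the paper. For part~(iii) the paper constructs the same auxiliary knot (calling it the \emph{descending long virtual knot} $K^d$) and invokes a corollary from the prequel paper for the identity $G_{00}(K^d;t)=G_{00}(K;t)-G_{01}(K;t)-G_{10}(K;t)+G_{11}(K;t)$, whereas you verify this identity directly from the definition; otherwise the arguments are the same.
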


\begin{proof}
(i) Since the closure of $K$ satisfies $sg(\widehat{K})\leq 1$, 
the writhe polynomial $W(\widehat{K};t)$ of $\widehat{K}$ is 
reciprocal by Lemma~\ref{lem31}. 
Then it follows from Lemma~\ref{lem25}(i) that 
\[W_0(K;t)+W_1(K;t^{-1})=W_0(K;t^{-1})+W_1(K;t).\] 

(ii) By Lemma~\ref{lem31}, 
the first intersection polynomial $I(\widehat{K};t)$ is reciprocal. 
Therefore, it follows from Lemma~\ref{lem25}(ii) that 
\begin{align*}
&F_{01}(K;t)+G_{00}(K;t)+G_{11}(K;t^{-1})+H_{01}(K;t^{-1})\\
&=
F_{01}(K;t^{-1})+G_{00}(K;t^{-1})+G_{11}(K;t)+H_{01}(K;t).
\end{align*}

(iii) Let $K^d$ denote the descending long virtual knot 
associated with $K$ (cf.~\cite[Section~2.2]{FKM}); 
that is, $K^{d}$ is presented a diagram $D$ with $I_{1}(D)=\emptyset$. 
Making a surface realization of $K$ on the torus $\Sigma_{1}$ descending, 
we obtain a surface realization of $K^d$ on $\Sigma_{1}$. 
Hence, we have $sg_1(K^d)\leq 1$, 
and 
\[F_{01}(K^d;t)+G_{00}(K^d;t)-G_{11}(K^d;t)-H_{01}(K^d;t)\]
is reciprocal by (ii). 
On the other hand, it follows from \cite[Corollary~7.4(ii)]{NNSW1} that 
\begin{align*}
&F_{01}(K^d;t)=G_{11}(K^d;t)=H_{01}(K^d;t)=0 \mbox{ and}\\
&G_{00}(K^d;t)=G_{00}(K;t)-G_{01}(K;t)-G_{10}(K;t)+G_{11}(K;t).
\end{align*}
Therefore, the conclusion follows. 
\end{proof}

The second genus of a long virtual knot 
is defined by using a $2$-punctured surface as follows. 
Let $\Sigma_{g,2}$ be a connected, oriented, compact surface 
of genus~$g$ with two boundary components. 
We regard a long virtual knot diagram $D$ 
as a tangle diagram with two boundary points. 
Then we can consider a surface realization of $D$ on $\Sigma_{g,2}$ 
such that the two boundary points of $D$ lie 
on distinct boundary components of $\Sigma_{g,2}$. 
See Figure~\ref{fig:supporting-genus}. 

\begin{figure}[htbp]
  \centering
    \begin{overpic}[]{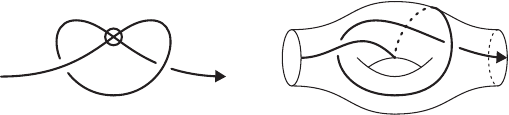} 
      \put(50.5,-12){$D$}
      \put(173,-12){$(\Sigma_{1,2},D)$}
    \end{overpic}
  \vspace{1em}
  \caption{A surface realization $(\Sigma_{1,2},D)$}
  \label{fig:supporting-genus}
\end{figure}

\begin{definition}\label{def35}
For a long virtual knot $K$, 
\[sg_2(K)=\min\{g\mid \mbox{$(\Sigma_{g,2},D)$ is 
a surface realization of $K$}\}\] 
is called the {\it $2$-supporting genus} of $K$. 
\end{definition}

\begin{lemma}\label{lem36}
If $sg_2(K)=0$ holds, 
then we have the following. 
\begin{enumerate}
\setlength{\itemsep}{1mm}
\item
$W_0(K;t)=W_1(K;t)$. 
\item
$F_{ab}(K;t)=G_{ab}(K;t)=0$ 
for any $a,b\in\{0,1\}$. 
\item
$H_{ab}(K;t)=W_0(K;t)W_0(K;t^{-1})$ 
for any $a,b\in\{0,1\}$. 
\end{enumerate}
\end{lemma}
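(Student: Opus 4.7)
Starting from a surface realization $(\Sigma_{0,2}, D)$ of $K$, I would cap off by attaching a tube joining the two boundary components of $\Sigma_{0,2}$, placing the basepoint on the tube, so that the resulting closed surface is $\Sigma_1 = T^2$. Using the symplectic basis $(m, \ell)$ for $H_1(T^2) \cong \Z^2$ with $m \cdot \ell = 1$ --- where $m$ is a cycle passing once through the tube and $\ell$ is the core of the annular part --- each $\alpha_i$ avoids the basepoint and so lies entirely inside the annular part, giving $[\alpha_i] = p_i \ell$ for some $p_i \in \Z$. Writing $[\gamma_D] = m + w\ell$ and using $\alpha_i + \beta_i = \gamma_D$, we get $[\beta_i] = m + (w - p_i)\ell$. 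A direct homological computation then yields
\[ \alpha_i \cdot \alpha_j = 0, \qquad \alpha_i \cdot \beta_j = p_i, \qquad \beta_i \cdot \beta_j = p_j - p_i, \]
and in particular $n_i = \alpha_i \cdot \beta_i = p_i$; crucially, $\alpha_i \cdot \beta_j$ depends only on $i$.

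Parts (ii) and (iii) drop out by substitution. Every summand in $F_{ab}$ is $\e_i\e_j(t^0 - 1) = 0$. In $G_{ab}$, the double sum factors as $\bigl(\sum_{j \in I_b}\e_j\bigr)\bigl(\sum_{i \in I_a}\e_i(t^{n_i}-1)\bigr) = \omega_b(D) \cdot W_a(K;t)$, exactly cancelling the correction term. For $H_{ab}$, the exponent $n_j - n_i$ makes the double sum factor as $(W_b(K;t) + \omega_b(D))(W_a(K;t^{-1}) + \omega_a(D))$ after an additive correction, and once the remaining correction terms are subtracted one is left with $H_{ab}(K;t) = W_a(K;t^{-1})W_b(K;t)$, which collapses to $W_0(K;t)W_0(K;t^{-1})$ once (i) is available.

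The heart of the lemma is (i). I would exploit the fact that $W_0 - W_1$ is invariant under crossing change: switching the over/under at $c_i$ preserves $\alpha_i, \beta_i$ and $n_i$ but flips both $\e_i$ and the type of $c_i$, so the contribution $\e_i(t^{n_i}-1)$ to $W_0(K;t) - W_1(K;t)$ is unchanged. Iterating at every type-$1$ crossing gives $W_0(K;t) - W_1(K;t) = W_0(K^d;t)$, where $K^d$ is the descending associated long virtual knot; since $sg_2(K^d) \leq sg_2(K) = 0$, it admits a descending diagram on $\Sigma_{0,2}$. Realizing $K^d$ inside the thickened annulus $\Sigma_{0,2}\times\R$ with every over-strand at greater height, and isotoping so that the $z$-coordinate is strictly monotone along the knot, the lift to the universal cover $\R\times[0,1]\times\R$ becomes a graph-like embedded arc. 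Such an arc straightens by a linear homotopy to the segment joining its endpoints, which remains in the slab and whose $\Z$-translates are parallel and hence disjoint from it, so the projection back to $\Sigma_{0,2}$ is an embedded arc. Thus $K^d$ is equivalent, as a long virtual knot, to an embedded arc on the annulus, for which $W_0 = 0$.

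The main obstacle is this last step: the straight-line homotopy is elementary in $\R^3$, but to turn it into a genuine equivalence of long virtual knots one must verify that it is compatible with the $\Z$-action, so that the projected isotopy on the annulus is well defined and no translate collides with the moving arc during the deformation. It is precisely the abelian, free structure of $\pi_1(\Sigma_{0,2}) \cong \Z$ that makes this possible; the analogous straightening in higher-genus thickened surfaces would require an abelian deck action that is not available, which is exactly why the conclusion $W_0 = W_1$ is not expected to persist past $sg_2 = 0$.
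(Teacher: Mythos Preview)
Your proof is correct and, for parts (ii) and (iii), essentially identical to the paper's: both compute $[\alpha_i]$ as a multiple of the meridian on the glued torus, read off $\alpha_i\cdot\alpha_j=0$ and $\alpha_i\cdot\beta_j=\alpha_i\cdot\beta_i$, and factor the resulting sums. Your factorization of the $H_{ab}$ sum via $t^{n_j-n_i}-1=(t^{-n_i}-1)(t^{n_j}-1)+(t^{-n_i}-1)+(t^{n_j}-1)$ is exactly what the paper writes out.

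For part (i) the strategies coincide in outline---both use the crossing-change invariance of $W_0-W_1$ (the paper quotes \cite[Theorem~7.1]{NNSW1}, you verify it directly) and then trivialize---but the endgames differ. The paper glues $\Sigma_{0,2}$ to the torus and simply observes that the resulting closed curve, having class $k\mu+\ell$ (primitive, since it crosses the glued tube once), can be taken to a crossingless diagram by crossing changes and Reidemeister moves. You instead stay on the annulus, pass to the descending knot $K^d$, and straighten in the universal cover $\R\times[0,1]\times\R$. Your approach is more explicit; the paper's is shorter because it leans on the standard fact that a primitive class on $T^2$ has an embedded representative.

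One remark: the ``main obstacle'' you flag is not actually an obstacle. The straight-line homotopy $H_s(z)=\bigl((1-s)x(z)+s\tilde x(z),\,(1-s)y(z)+s\tilde y(z),\,z\bigr)$ is affine in the spatial coordinates, hence commutes with the deck translation $x\mapsto x+n$; the $n$-th translate is obtained from $H_s$ by adding $n$ to the first coordinate, so distinct translates differ by a nonzero constant in that coordinate at every $z$ and every $s$, and never meet. The convexity of $[0,1]$ keeps the $y$-coordinate in range, and the $z$-coordinate is unchanged, so each $H_s$ is a graph over $z$ and thus embedded. The homotopy therefore descends to an honest ambient isotopy in $\Sigma_{0,2}\times\R$ without further work.
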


\begin{proof}
(i) By identifying the boundary components of the annulus $\Sigma_{0,2}$, 
we obtain a surface realization of $K$ with a basepoint 
on the torus $\Sigma_{1}$. 
Then it can be deformed into a diagram with no crossings, 
which presents the trivial knot, 
by a finite sequence of crossing changes and Reidemeister moves. 
See Figure~\ref{fig:pf-sg2}. 
Therefore, $W_0(K;t)-W_1(K;t)=0$ holds by \cite[Theorem~7.1]{NNSW1}. 

\begin{figure}[htbp]
  \centering
  \vspace{0.5em}
    \begin{overpic}[]{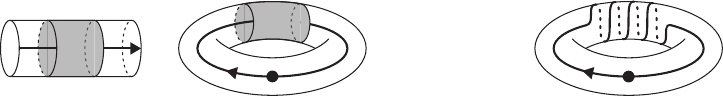}
      \put(31,19.5){$D$}
      \put(17,-12){$(\Sigma_{0,2},D)$}
      \put(128,32.5){$D$}
      \put(115.5,-12){$(\Sigma_{1},D)$}
      \put(180,20.15){$\xrightarrow[\text{Reidemeister moves}]{\text{crossing changes}}$}
      \put(284.5,49){$1$ $2\cdots k$}
    \end{overpic}
  \vspace{1em}
  \caption{Proof of Lemma~\ref{lem36}(i)}
  \label{fig:pf-sg2}
\end{figure}

(ii) We may assume that $D$ is untwisted; 
that is, $\omega_0(D)=\omega_1(D)=0$. 
Each cycle $\alpha_i$ satisfies $\alpha_i=k_i \mu$ for some $k_i\in{\Z}$, 
where $\mu$ is a meridian of $\Sigma_{1}$. 
By $\alpha_i\cdot\alpha_j=0$ for any $i$ and $j$, 
we have $F_{ab}(K;t)=0$ 
for any $a,b\in\{0,1\}$. 
On the other hand, since it holds that 
\[\alpha_i\cdot\beta_j=\alpha_i\cdot(\gamma_D-\alpha_j)
=\alpha_i\cdot\gamma_D
=\alpha_i\cdot(\alpha_i+\beta_i)
=\alpha_i\cdot\beta_i,\]
we have 
\begin{align*}
G_{ab}(K;t)&=\sum_{i\in I_a(D),\,j\in I_b(D)}
\varepsilon_i\varepsilon_j(t^{\alpha_i\cdot\beta_j}-1)\\
&=\sum_{i\in I_a(D),\,j\in I_b(D)}
\varepsilon_i\varepsilon_j(t^{\alpha_i\cdot\beta_i}-1)
=\omega_b(D)\cdot W_a(K;t)=0.
\end{align*}

(iii) It holds that 
\[\beta_i\cdot\beta_j=(\gamma_D-\alpha_i)\cdot(\gamma_D-\alpha_j)
=-\alpha_i\cdot\gamma_D+\alpha_j\cdot\gamma_D
=-\alpha_i\cdot\beta_i+\alpha_j\cdot\beta_j.\]
By (i), we have 
\begin{align*}
H_{ab}(K;t)&=\sum_{i\in I_a(D),j\in I_b(D)}
\varepsilon_i\varepsilon_j
(t^{-\alpha_i\cdot\beta_i+\alpha_j\cdot\beta_j}-1)\\
&=
\Biggl(\sum_{i\in I_a(D)}\e_i(t^{-\alpha_i\cdot\beta_i}-1)\Biggr)
\Biggl(\sum_{j\in I_b(D)}\e_j(t^{\alpha_j\cdot\beta_j}-1)\Biggr)\\
&\quad+
\sum_{i\in I_a(D),j\in I_b(D)}\e_i\e_j(t^{-\alpha_i\cdot\beta_i}-1)
+
\sum_{i\in I_a(D),j\in I_b(D)}\e_i\e_j(t^{\alpha_j\cdot\beta_j}-1)\\
&=
W_a(K;t^{-1})W_b(K;t)=W_0(K;t)W_0(K;t^{-1}). 
\end{align*}
\end{proof}

For an integer $g\geq 0$, 
we define two sets of long virtual knots by 
\begin{align*}
\mathcal{K}_1(g)&=\{K\mid sg_1(K) \leq g\} \mbox{ and }\\
\mathcal{K}_2(g)&=\{K\mid sg_2(K) \leq g\}.
\end{align*}

\begin{lemma}\label{lem37}
We have $sg_2(K)\leq sg_1(K)\leq sg_2(K)+1$, 
and hence 
\[\mathcal{K}_1(0)\subset\mathcal{K}_2(0)\subset
\mathcal{K}_1(1)\subset\mathcal{K}_2(1)\subset
\mathcal{K}_1(2)\subset\cdots.\]
\end{lemma}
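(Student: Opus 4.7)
The plan is to prove the two inequalities $sg_2(K)\leq sg_1(K)$ and $sg_1(K)\leq sg_2(K)+1$ separately by direct surgery constructions on a surface realization, after which the filtration follows by routine combination.

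For $sg_1(K)\leq sg_2(K)+1$, I would start from a realization $(\Sigma_{g,2},D)$ with $g=sg_2(K)$; here the two strand ends of $D$ lie on the two distinct boundary circles. Gluing these boundary circles by an orientation-reversing homeomorphism that identifies the two strand ends yields a closed oriented surface of genus $g+1$, together with a canonical basepoint where the strand ends are identified. This gives a $\Sigma_{g+1}$-realization of $K$, so $sg_1(K)\leq g+1$.

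For $sg_2(K)\leq sg_1(K)$, I would start from a realization $(\Sigma_g,D)$ with basepoint $*$ and $g=sg_1(K)$. Choose a small disk $V$ centered at $*$ such that $D\cap V$ is a short sub-arc through $*$, and delete $\mathrm{int}(V)$ to obtain $\Sigma_{g,1}$ carrying both strand ends on its single boundary circle $\partial V$. The strand ends partition $\partial V$ into two arcs; I would then attach an abstract $1$-handle along two small arcs, one chosen in each of these two boundary arcs, using an attaching map compatible with the ambient orientation. An Euler characteristic count $(1-2g)+1-2=-2g$, together with the standard fact that orientation-compatible handle attachment on a single boundary circle splits it into two circles, identifies the resulting surface as $\Sigma_{g,2}$, with exactly one strand end of $D$ on each of its two boundary circles. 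Since the $1$-handle is attached abstractly and disjoint from $D$, this is a genuine $\Sigma_{g,2}$-realization of $K$.

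Combining, $sg_2\leq sg_1$ yields $\mathcal{K}_1(g)\subset\mathcal{K}_2(g)$ for each $g\geq 0$, while $sg_1\leq sg_2+1$ yields $\mathcal{K}_2(g)\subset\mathcal{K}_1(g+1)$, and chaining these inclusions gives the stated filtration. The main point requiring care is ensuring orientation compatibility of the $1$-handle attachment in the second construction, so that the result is an oriented surface of the correct genus whose boundary consists of two circles rather than remaining connected; this is a standard feature of oriented handle surgery on surfaces.
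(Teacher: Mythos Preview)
Your argument is correct. For $sg_1(K)\leq sg_2(K)+1$ you do exactly what the paper does: glue the two boundary circles of $\Sigma_{g,2}$ to obtain $\Sigma_{g+1}$ with a basepoint.

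For $sg_2(K)\leq sg_1(K)$ your construction works, but it is more elaborate than the paper's. The paper simply removes a \emph{pair} of small open disks from $\Sigma_g$, one on each side of the basepoint along $D$ (each meeting $D$ in a short crossing-free arc), and discards the tiny arc of $D$ between them containing $*$. This immediately yields $\Sigma_{g,2}$ with one strand end on each boundary circle, with no handle attachment or Euler-characteristic bookkeeping required. Your route---delete one disk, then attach an abstract oriented $1$-handle to split the boundary---achieves the same surface, and your Euler-characteristic check and orientation remark are correct; but the two-disk removal is the cleaner phrasing and avoids the need to argue about orientation compatibility of the handle.
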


\begin{proof}
If $K$ has a surface realization $(\Sigma_g,D)$, 
then removing a pair of disks near the basepoint from $\Sigma_g$ 
provides a surface realization $(\Sigma_{g,2},D)$. 
See the left and middle of Figure~\ref{fig:supporting-genus-pf}. 
Therefore, we have $sg_2(K)\leq sg_1(K)$. 

\begin{figure}[htbp]
  \centering
    \begin{overpic}[]{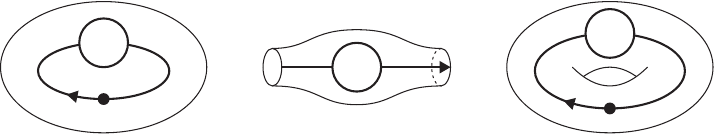} 
      \put(46,40){$D$}
      \put(33,-12){$(\Sigma_{g},D)$}
      \put(168,28.5){$D$}
      \put(153,-12){$(\Sigma_{g,2},D)$}
      \put(289.5,45){$D$}
      \put(273,-12){$(\Sigma_{g+1},D)$}
    \end{overpic}
  \vspace{1em}
  \caption{Proof of Lemma~\ref{lem37}}
  \label{fig:supporting-genus-pf}
\end{figure}

On the other hand, 
if $K$ has a surface realization $(\Sigma_{g,2},D)$, 
then identifying the boundary components of $\Sigma_{g,2}$ 
provides a surface realization $(\Sigma_{g+1},D)$. 
Therefore, we have $sg_1(K)\leq sg_2(K)+1$. 
See the middle and right of the figure. 
\end{proof}

\begin{theorem}\label{thm38}
We have the following. 
\begin{enumerate}
\setlength{\itemsep}{1mm}
\item
$\mathcal{K}_1(0)\ne \mathcal{K}_2(0)$. 
\item
$\mathcal{K}_2(0)\ne\mathcal{K}_1(1)$. 
\item
$\mathcal{K}_1(1)\ne \mathcal{K}_2(1)$. 
\end{enumerate}
Moreover, there are infinitely many long virtual knots 
which realize the difference between the two sets 
in each of {\rm (i)}--{\rm (iii)}. 
\end{theorem}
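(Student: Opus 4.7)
The plan is to prove each strict containment by constructing a long virtual knot that lies in the larger set but whose polynomial invariants violate an obstruction for the smaller set, and then amplify each single example into an infinite family by concatenation products with long classical knots.

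For (i), I would exhibit a long virtual knot $K_1$ with a surface realization $(\Sigma_{0,2}, D)$, so that $sg_2(K_1)=0$ by construction, containing a real crossing whose smoothed cycle $\alpha_i$ is homologically nontrivial on the annulus. Since $\gamma_D$ joins the two boundary components, $\alpha_i\cdot\gamma_D\neq 0$ for some $i$, making $W_0(K_1;t)$ or $W_1(K_1;t)$ nonzero, and Lemma~\ref{lem33}(i) then rules out $sg_1(K_1)=0$. For (ii), I would similarly exhibit $K_2$ on the torus $\Sigma_1$ with a type-$0$ real crossing whose cycle $\alpha_i$ represents a nontrivial homology class (and no type-$1$ crossings of this form), arranged so that $W_0(K_2;t)\neq W_1(K_2;t)$. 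Then $sg_1(K_2)\leq 1$ by construction, while Lemma~\ref{lem36}(i) forces $sg_2(K_2)\geq 1$, giving $K_2\in\mathcal{K}_1(1)\setminus\mathcal{K}_2(0)$.

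Part (iii) is the subtlest. Here I would construct $K_3$ with a realization on $\Sigma_{1,2}$ such that $W_0(K_3;t)-W_1(K_3;t)$ is nonzero and not reciprocal; by Lemma~\ref{lem34}(i) this forces $sg_1(K_3)\geq 2$, while $sg_2(K_3)\leq 1$ by construction. Combinatorially, the idea is to place real crossings on $\Sigma_{1,2}$ (an annulus with one extra handle) so that the values $\alpha_i\cdot\gamma_D$ are distributed between $I_0(D)$ and $I_1(D)$ in a way which is not symmetric under negation, thereby producing a non-reciprocal difference $W_0-W_1$. If this turns out to be awkward, one may instead aim at the non-reciprocity of $G_{00}-G_{01}-G_{10}+G_{11}$ via Lemma~\ref{lem34}(iii), which gives an alternative obstruction to $sg_1\leq 1$. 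The main obstacle will be producing a concrete diagram on $\Sigma_{1,2}$ whose writhe data is rich enough to verify the required non-reciprocity; this is where a careful explicit example must be worked out.

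To prove the final clause, I would upgrade each single witness $K_i$ to an infinite family by taking the concatenation product $K_i\circ L$ with arbitrary long classical knots $L$. By Lemma~\ref{lem33}, every writhe and intersection polynomial of $L$ vanishes, so by Lemma~\ref{lem24} the polynomial invariants of $K_i\circ L$ agree with those of $K_i$; hence the same obstruction remains in force. Moreover, gluing the planar realization of $L$ to the realization of $K_i$ at the basepoint shows $sg_1(K_i\circ L)\leq sg_1(K_i)$ and $sg_2(K_i\circ L)\leq sg_2(K_i)$, so $K_i\circ L$ lies in the same set difference as $K_i$. Since distinct long classical knots yield distinct concatenation products (distinguished, for instance, by classical invariants of the closure $\widehat{K_i\circ L}$), varying $L$ over the infinitely many long classical knots produces infinitely many witnesses in each of (i)--(iii).
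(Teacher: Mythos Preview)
Your overall strategy for (i)--(iii) is essentially the same as the paper's: exhibit a diagram on the appropriate surface and use Lemmas~\ref{lem33}, \ref{lem36}, and \ref{lem34} as obstructions. One minor difference is that for (ii) the paper uses $F_{00}\ne 0$ together with Lemma~\ref{lem36}(ii), whereas you propose $W_0\ne W_1$ with Lemma~\ref{lem36}(i); both work, though with your choice you must also ensure $W_0-W_1$ is reciprocal (Lemma~\ref{lem34}(i)) so as not to contradict $sg_1\le 1$. The paper does carry out explicit constructions (figures and computations) which your proposal only promises, but the ideas match.

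The real divergence, and the weak point, is the ``infinitely many'' clause. The paper handles this by building parametric families $K_n$, $K_n'$, $K_n''$ whose writhe or intersection polynomials depend on $n$, so the very invariants already computed distinguish the members. Your route via concatenation with long classical knots $L$ keeps the polynomial invariants fixed (correct, by Lemmas~\ref{lem24} and \ref{lem33}) and keeps the supporting genera bounded (correct, as in Remark~\ref{rem39}), but then you must show that distinct $L$ produce distinct $K_i\circ L$. Your justification, ``classical invariants of the closure $\widehat{K_i\circ L}$'', is not a proof: the closure is a virtual knot, and you have not specified which invariant you mean, nor why it separates $\widehat{K_i\circ L_1}$ from $\widehat{K_i\circ L_2}$. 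Multiplicativity of, say, the Jones polynomial under this particular closure-of-concatenation is not something you can simply assert, and cancellation in the monoid of long virtual knots is a nontrivial statement. Either supply a concrete invariant and prove it does the job, or adopt the paper's simpler device of making the polynomial obstructions themselves depend on a parameter.
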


\begin{proof}
(i) For an integer $n\geq1$, 
we consider the long virtual knot $K_n$ 
presented by a surface realization on the annulus $\Sigma_{0,2}$ 
as shown in Figure~\ref{fig:pf-thm37i}. 
We have $sg_{2}(K_{n})=0$, 
and hence $sg_{1}(K_{n})\leq 1$  
by Lemma~\ref{lem37}. 
Since it can be verified that 
\[
W_0(K_n;t)=n(t-1)\ne 0,
\] 
we have $sg_1(K_n)\ne 0$  by Lemma~\ref{lem33}(i), 
and hence $sg_1(K_n)=1$. 
Therefore, $K_n\in\mathcal{K}_2(0)\setminus\mathcal{K}_1(0)$ holds. 
Moreover, $K_n$'s are all distinct. 

\begin{figure}[htbp]
  \centering
    \begin{overpic}[]{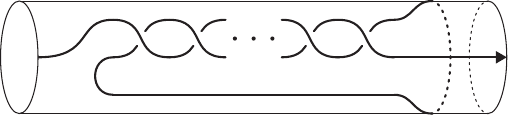}
      \put(64.5,21){$c_{1}$}
      \put(91,21){$c_{2}$}
      \put(140,21){$c_{2n-1}$}
      \put(171,21){$c_{2n}$}
    \end{overpic}
  \caption{A surface realization of $K_n$}
  \label{fig:pf-thm37i}
\end{figure}

(ii) For an integer $n\geq 1$, 
we consider the long virtual knot $K_n'$ 
presented by a surface realization on the torus $\Sigma_{1}$ 
as shown in Figure~\ref{fig:pf-thm37ii}. 
We have $sg_{1}(K_{n}')\leq 1$, 
and hence $sg_{2}(K_{n}')\leq1$ by Lemma~\ref{lem37}. 
Since it can be verified that 
\[
F_{00}(K_n';t)=n(t-2+t^{-1})\ne 0,
\]
we have $sg_2(K_n')=1$ 
by Lemma~\ref{lem36}(ii) and $sg_1(K_n')=1$ by Lemma~\ref{lem37}. 
Therefore, $K_n'\in\mathcal{K}_1(1)\setminus\mathcal{K}_2(0)$ holds. 
Moreover, $K_n'$'s are all distinct. 

\begin{figure}[htbp]
  \centering
    \begin{overpic}[]{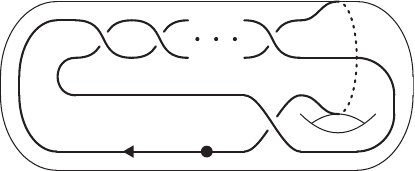}
      \put(47,48){$c_{1}$}
      \put(73,48){$c_{2}$}
      \put(123,48){$c_{2n-1}$}
      \put(126,5){$c_{2n}$}
    \end{overpic}
  \caption{A surface realization of $K_n'$}
  \label{fig:pf-thm37ii}
\end{figure}

(iii) For an integer $n\geq1$, 
we consider the long virtual knot $K_n''$ 
presented by a surface realization on the $2$-punctured torus $\Sigma_{1,2}$ 
as shown in Figure~\ref{fig:pf-thm37iii}. 
We have $sg_{2}(K_{n}'')\leq 1$, and hence, $sg_{1}(K_{n}'')\leq2$ 
by Lemma~\ref{lem37}. 
Since 
\[
W_0(K_n'';t)-W_1(K_n'';t)=\qty((n-1)t^2+2t-(n+1))-n(t^2-1)=-t^2+2t-1
\]
is not reciprocal, 
we have $sg_1(K_n'')=2$ by Lemma~\ref{lem34}(i) 
and $sg_2(K_n'')=1$ by Lemma~\ref{lem37}. 
Therefore, $K_n''\in\mathcal{K}_2(1)\setminus\mathcal{K}_1(1)$ holds. 
Moreover, $K_n''$'s are all distinct. 
\end{proof}

\begin{figure}[htbp]
  \centering
    \begin{overpic}[]{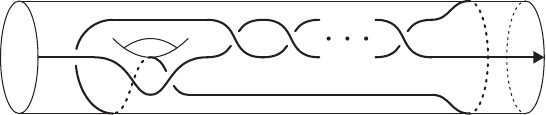}
      \put(26,18){$c_{1}$}
      \put(78,4.5){$c_{2}$}
      \put(110,20){$c_{3}$}
      \put(136,20){$c_{4}$}
      \put(184,20){$c_{2n+1}$}
    \end{overpic}
  \caption{A surface realization of $K_n''$}
  \label{fig:pf-thm37iii}
\end{figure}

\begin{remark}\label{rem39} 
By definition, 
the $1$- and $2$-supporting genera of the product 
of long virtual knots $K_{1}$ and $K_{2}$ satisfy 
\begin{enumerate}
\setlength{\itemsep}{1mm}
\item
$sg_{1}(K_{1}\circ K_{2})\leq sg_{1}(K_{1})+sg_{1}(K_{2})$ and 
\item
$sg_{2}(K_{1}\circ K_{2})\leq sg_{2}(K_{1})+sg_{2}(K_{2})$.  
\end{enumerate} 
Moreover, we can construct a pair of long virtual knots 
which does not attain the equality in (i) as follows. 
Suppose that $K_1$ and $K_2$ are nonclassical long virtual knots 
with $sg_2(K_1)=sg_2(K_2)=0$, and that $K_1\circ K_2$ is also nonclassical. 
Since each of $K_1$, $K_2$, and $K_1\circ K_2$ 
has a surface realization on the torus $\Sigma_{1}$, 
we have 
\[sg_1(K_1)=sg_1(K_2)=sg_1(K_1\circ K_2)=1.\]
\end{remark}

\section{Invariants of a virtual tangle}\label{sec4}

We consider an oriented virtual tangle diagram $E$ 
consisting of two strings $A$ and $B$, 
each connecting two of the four endpoints 
as shown on the left of Figure~\ref{fig:tangle-closure}. 
A {\it virtual $2$-string tangle} $T$ is defined as 
an equivalence class of such diagrams $E$ 
under Reidemeister moves keeping the endpoints fixed. 
Let $R(E)$ denote the long virtual knot diagram 
obtained from $E$ by closing the pair of the right endpoints, 
as shown on the right of the figure. 
The {\it right closure} of $T$ 
is the long virtual knot presented by $R(E)$, 
and is denoted by $R(T)$. 

\begin{figure}[htbp]
  \centering
    \begin{overpic}[]{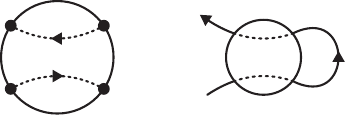}
      \put(-24,25.1){$E=$}
      \put(23.5,42.5){$B$}
      \put(23,6){$A$}
      
      \put(84,46){$\infty$}
      \put(79,6){$-\infty$}
      \put(123,24){$E$}
      \put(116,-8){$R(E)$}
    \end{overpic}
  \vspace{1em}
  \caption{A tangle diagram and its right closure}
  \label{fig:tangle-closure}
\end{figure}

Any invariant of a long virtual knot 
induces an invariant of a virtual $2$-string tangle 
by using the right closure. 
In particular, the writhe polynomials 
$W_0(R(T);t)$ and $W_1(R(T);t)$ 
are regarded as invariants of $T$. 
These invariants can be elaborated as follows. 
Let $c_1,\cdots,c_m$ be the real crossings of $E$. 
For each $i$ with $1\leq i\leq m$, we classify $c_i$ as follows; 
\begin{itemize}
\item
$c_i$ is {\it of type $(A,A;0)$} 
if both intersecting paths at $c_i$ belong to the string $A$, 
and we pass $c_i$ first over and 
then under along $A$, 
\item
$c_i$ is {\it of type $(A,A;1)$} 
if both paths belong to $A$, 
and we pass $c_i$ first under and then over along $A$, 
\item
$c_i$ is {\it of type $(B,B;0)$} 
if both paths belong to $B$, 
and we pass $c_i$ first over and then under along $B$, 
\item
$c_i$ is {\it of type $(B,B;1)$} 
if the paths at $c_i$ both belong to $B$, 
and we pass $c_i$ first going under and 
then over along $B$, 
\item
$c_i$ is {\it of type $(A,B)$} 
if the over-path at $c_i$ belongs to $A$ and 
the under-path to $B$, and 
\item
$c_i$ is {\it of type $(B,A)$} 
if the over-path belongs to $B$ and 
the under-path to $A$. 
\end{itemize}

We define six sets by  
\begin{center}
\begin{tabular}{ll}
$J_0^A(E)=\{i\mid \mbox{$c_i$ is of type $(A,A;0)$}\}$, & 
$J_1^A(E)=\{i\mid \mbox{$c_i$ is of type $(A,A;1)$}\}$, \smallskip\\
$J_0^B(E)=\{i\mid \mbox{$c_i$ is of type $(B,B;0)$}\}$, & 
$J_1^B(E)=\{i\mid \mbox{$c_i$ is of type $(B,B;1)$}\}$, \smallskip\\
$J_0'(E)=\{i\mid \mbox{$c_i$ is of type $(A,B)$}\}$, & 
$J_1'(E)=\{i\mid \mbox{$c_i$ is of type $(B,A)$}\}$. 
\end{tabular}
\end{center}
Moreover, we put 
\[J_0(E)=J_0^A(E)\cup J_0^B(E) \mbox{ and }
J_1(E)=J_1^A(E)\cup J_1^B(E).\] 
For any $a\in\{0,1\}$, it follows by definition that 
\[I_a(R(E))=J_a(E)\cup J_a'(E).\]

We take a surface realization 
$(\Sigma_g,R(E))$ of the right closure $R(E)$. 
Let $\alpha_i$ and $\beta_i$ 
be the cycles defined at $c_i$ associated with 
$(\Sigma_g,R(E))$. 
For $a\in\{0,1\}$ and $X\in\{A,B\}$, 
we define six Laurent polynomials by 
\[U_a^X(E;t)=\sum_{i\in J_a^X(E)}\e_i(t^{\alpha_i\cdot\beta_i}-1) \mbox{ and }
V_a(E;t)=\sum_{i\in J_a'(E)}\e_it^{\alpha_i\cdot\beta_i}.\]

\begin{lemma}\label{lem41} 
The Laurent polynomials 
\[U_0^A(E;t), \ U_0^B(E;t), \ 
U_1^A(E;t), \ U_1^B(E;t),\ V_0(E;t), \mbox{ and } V_1(E;t) 
\] 
are invariants of $T$. 
\end{lemma}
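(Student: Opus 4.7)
The plan is to verify invariance of each of the six polynomials under the tangle Reidemeister moves fixing the four endpoints of $E$. Since the right closure $R$ takes equivalent tangle diagrams to equivalent long virtual knot diagrams, $W_a(R(T);t)$ is a known invariant of $T$. Using the partition $I_a(R(E)) = J_a^A(E) \cup J_a^B(E) \cup J_a'(E)$, one has the decomposition
\[
W_a(R(T);t) = U_a^A(E;t) + U_a^B(E;t) + V_a(E;t) - \omega_a'(E),
\]
where $\omega_a'(E) = \sum_{i \in J_a'(E)} \varepsilon_i$. Thus the total sum is automatically invariant, and what remains is to show that each of $U_a^A$, $U_a^B$, $V_0$, $V_1$ is individually preserved.

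The virtual and mixed Reidemeister moves are immediate, since they do not alter any real crossing or the homology class of the cycles $\alpha_i, \beta_i$. For the classical moves I would argue as follows. Under R1, the added or removed self-crossing $c$ lies on a single string $X$, and the curl provides a local disk with $\alpha_c \cdot \beta_c = 0$, so its contribution to $U_a^X$ is zero and the other five polynomials are untouched. Under R2, the two crossings $c_i, c_j$ of opposite signs share their two strands; tracking traversal order and over/under data confirms that either both lie in the same set $J_a^X$ (when both strands are on string $X$) or both lie in the same set $J_c'(E)$ (when the strands are on different strings). The bigon between $c_i$ and $c_j$, together with the local oriented smoothings, bounds a surface in $\Sigma_g$ that forces $\alpha_i \cdot \beta_i = \alpha_j \cdot \beta_j$; combined with $\varepsilon_i = -\varepsilon_j$, the contributions cancel inside the relevant polynomial. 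Under R3, the string labels of the three strands and the pairwise over/under pattern are preserved, so the three crossings before and after fall into the same multiset of types, and a homological identification of the associated cycles (analogous to the classical R3 argument for $W_a$) yields equality of each polynomial before and after the move.

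The main technical obstacle is the R2 step. Whereas the invariance of $W_a(R(T);t)$ aggregates over all crossings and permits cancellation across different types, here the cancellation must occur inside each of the six polynomials separately. This demands two separate checks for each R2 configuration: a combinatorial one showing that both crossings land in the same type set, and a homological one showing that the intersection numbers $\alpha_i \cdot \beta_i$ coincide. The first is a finite case analysis over the possible parallel/antiparallel and over/under assignments; the second exploits the R2 bigon, together with the local oriented smoothings at $c_i$ and $c_j$, as a cobordism in $\Sigma_g$ showing that the relevant cycle pairs are homologous. Once these are established, invariance of all six polynomials follows uniformly.
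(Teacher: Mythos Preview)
Your proposal is correct and follows essentially the same approach as the paper, which simply cites \cite[Lemma~4.1]{HNNS2}: a direct verification of invariance under each Reidemeister move, using that an R1 curl contributes a null-homologous cycle, that the two R2 crossings land in the same type set with equal indices and opposite signs, and that R3 preserves both the type partition and the homology classes of the smoothed cycles. Your identification of the R2 case-check (same type set plus same index) as the crux is exactly the content of the cited argument, so nothing further is needed.
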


\begin{proof}
For each $a\in\{0,1\}$ and $X\in\{A,B\}$, 
the invariance of the Laurent polynomials 
$U_{a}^{X}(E;t)$ and $V_{a}(E;t)$ 
can be proved in a similar manner to 
the proof of \cite[Lemma 4.1]{HNNS2}. 
\end{proof}

We may replace the letter $E$ with $T$ 
in the invariants of Lemma~\ref{lem41}. 
For $a\in\{0,1\}$, we put 
\[\lambda_a(T)=V_a(T;1)=\sum_{i\in J_a'(E)}\e_i.\]  
The integers $\lambda_0(T)$ and $\lambda_1(T)$ 
are called the {\it linking numbers} of $T$, 
and $U_a^X(T;t)$ and $V_a(T;t)$ 
are the {\it writhe polynomials} of $T$. 
We also put 
\[U_a(T;t)=U_a^A(T;t)+U_a^B(T;t)
=\sum_{i\in J_a(E)}\e_i(t^{\alpha_i\cdot\beta_i}-1).
\]

\begin{lemma}\label{lem42}
For any $a\in\{0,1\}$, we have 
\[W_a(R(T);t)=
U_a(T;t)+V_a(T;t)-\lambda_a(T).\]
\end{lemma}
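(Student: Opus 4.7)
The plan is to expand the definition of $W_a(R(T);t)$ over the crossing set of $R(E)$, decompose that set according to which strings each crossing joins, and recognize the resulting pieces as $U_a(T;t)$, $V_a(T;t)$, and $\lambda_a(T)$. First, I would verify the (already noted) decomposition $I_a(R(E)) = J_a(E) \cup J_a'(E)$: a self-crossing of $A$ or $B$ in $E$ keeps its type in $R(E)$ because the right closure does not reroute either string internally, and the orientations of $A$ and $B$ are chosen compatibly with the long-knot orientation (from $-\infty$ to $+\infty$). For a mixed crossing, the string $A$ is traversed first by $R(E)$, so an $(A,B)$-crossing is of type $0$ and a $(B,A)$-crossing of type $1$, which matches the definitions of $J_0'(E)$ and $J_1'(E)$.

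With the decomposition in place, one writes
\[
W_a(R(T);t) = \sum_{i \in I_a(R(E))} \varepsilon_i\bigl(t^{\alpha_i \cdot \beta_i}-1\bigr)
= \sum_{i \in J_a(E)} \varepsilon_i\bigl(t^{\alpha_i \cdot \beta_i}-1\bigr)
+ \sum_{i \in J_a'(E)} \varepsilon_i\bigl(t^{\alpha_i \cdot \beta_i}-1\bigr).
\]
Since $J_a(E) = J_a^A(E) \sqcup J_a^B(E)$, the first sum is exactly $U_a^A(T;t)+U_a^B(T;t) = U_a(T;t)$ by definition. Splitting off the constant term in the second sum gives
\[
\sum_{i \in J_a'(E)} \varepsilon_i\, t^{\alpha_i \cdot \beta_i} \;-\; \sum_{i \in J_a'(E)} \varepsilon_i
\;=\; V_a(T;t) - \lambda_a(T),
\]
and combining the two pieces yields the claimed identity.

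The computation itself is pure bookkeeping, so the only genuine point that requires care is the crossing-type decomposition above; once the orientation conventions on the strings $A$ and $B$ are fixed in accordance with the figure, the result follows by a single rearrangement of the definitions.
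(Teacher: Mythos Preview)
Your proof is correct and follows essentially the same approach as the paper: split the sum defining $W_a(R(T);t)$ over $I_a(R(E))=J_a(E)\cup J_a'(E)$, identify the $J_a(E)$-part with $U_a(T;t)$, and rewrite the $J_a'(E)$-part as $V_a(T;t)-\lambda_a(T)$. Your added justification of the decomposition $I_a(R(E))=J_a(E)\cup J_a'(E)$ via the orientation conventions is fine, though the paper simply records this identity beforehand and invokes it directly.
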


\begin{proof}
Since $I_a(R(E))=J_a(E)\cup J_a'(E)$, 
we have 
\begin{align*}
W_a(R(T);t)&=
\sum_{i\in J_a(E)}\e_i(t^{\alpha_i\cdot\beta_i}-1)
+\sum_{i\in J_a'(E)}\e_i(t^{\alpha_i\cdot\beta_i}-1)\\
&=U_a(T;t)+V_a(T;t)-\lambda_a(T). 
\end{align*}
\end{proof}

Inspired by Conway's tangle sum, 
we define a sum of a virtual $2$-string tangle $T$ 
and a long virtual knot $K$ as follows. 
Let $E$ and $D$ be diagrams of $T$ and $K$, respectively. 
We form a long virtual knot diagram, denoted by $E+D$, 
by connecting the endpoints of $E$ and $D$ 
as shown in Figure~\ref{fig:generalized-product}. 
The {\it sum} of $T$ and $K$, 
written $T+K$, is the long virtual knot 
presented by $E+D$.

\begin{figure}[htbp]
  \centering
    \begin{overpic}[]{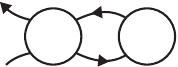}
      \put(-13,29){$\infty$}
      \put(-18,-2){$-\infty$}
      \put(21,12){$E$}
      \put(67,12){$D$}
    \end{overpic}
  \caption{A long virtual knot digram $E+D$ }
  \label{fig:generalized-product}
\end{figure}

\begin{theorem}\label{thm43}
For any $a,b\in\{0,1\}$, 
we have the following. 
\begin{enumerate}
\setlength{\itemsep}{1mm}
\item
$W_a(T+K;t)=W_a(R(T);t)+W_a(K;t)$. 
\item 
$F_{ab}(T+K;t)=F_{ab}(R(T);t)+F_{ab}(K;t)$
\smallskip\\
\hphantom{$F_{ab}(T+K;t)=$}
$+\lambda_b(T)\cdot W_a(K;t)+\lambda_a(T)\cdot W_b(K;t^{-1})$. 

\item 
$G_{ab}(T+K;t)=G_{ab}(R(T);t)+G_{ab}(K;t)$
\smallskip\\
\hphantom{$G_{ab}(T+K;t)=$}
$-\lambda_b(T)\cdot W_a(K;t)
+V_a(T;t)\cdot W_b(K;t)$. 

\item 
$H_{ab}(T+K;t)=H_{ab}(R(T);t)+H_{ab}(K;t)$
\smallskip\\
\hphantom{$H_{ab}(T+K;t)=$}
$+(U_b(T;t)-\lambda_b(T))\cdot W_a(K;t^{-1})$
\smallskip\\
\hphantom{$G_{ab}(T+K;t)=$}
$+(U_a(T;t^{-1})-\lambda_a(T))\cdot W_b(K;t)$.
\end{enumerate}
\end{theorem}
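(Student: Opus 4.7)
The plan is to prove (i)--(iv) in parallel by constructing a canonical surface realization of $T+K$ and then splitting every crossing sum according to whether each index comes from the $E$-side or the $D$-side. Starting with surface realizations $(\Sigma_{g},R(E))$ and $(\Sigma_{h},D)$, I would excise a small disk from $\Sigma_{g}$ meeting only the right-closure arc and a small disk from $\Sigma_{h}$ around the basepoint of $K$, and glue the two boundary circles so that the cut closure arc is reconnected through $D$. This produces a realization $(\Sigma_{g+h},E+D)$ in which the $E$-crossings lie in the $\Sigma_{g}$-side and the $D$-crossings in the $\Sigma_{h}$-side; following the traversal $-\infty\to A\to D\to B\to+\infty$ shows $I_{a}(E+D)=I_{a}(R(E))\sqcup I_{a}(D)$ and $\omega_{a}(E+D)=\omega_{a}(R(E))+\omega_{a}(K)$, which already yields (i). The essential homological observation is that $\gamma_{E+D}=\gamma_{R(E)}+\gamma_{D}$ in $H_{1}(\Sigma_{g+h})$ (after representing the vanished closure arc by a small arc in the $\Sigma_{g}$-side), that $\alpha_{j}^{E+D}=\alpha_{j}^{K}$ at a $D$-crossing and $\alpha_{i}^{E+D}=\alpha_{i}^{R}$ at a non-mixed $E$-crossing, and that at a mixed $E$-crossing $c_{i}\in J'_{a}(E)$ the cycle $\alpha_{i}^{E+D}$ traverses $D$ once in the positive direction. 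Combined with $\alpha+\beta=\gamma$, $\alpha\cdot\alpha=0$, and the antisymmetry of the intersection form, these identify every cross-intersection with a writhe contribution of $K$: for mixed $c_{i}$ and $D$-crossing $d_{j}$, $\alpha_{i}^{E+D}\cdot\alpha_{j}^{K}=\gamma_{D}\cdot\alpha_{j}^{K}=-\alpha_{j}\cdot\beta_{j}$, while $\alpha_{j}^{K}\cdot\alpha_{i}^{E+D}=\alpha_{j}\cdot\beta_{j}$, and the self-intersections $\alpha_{i}^{E+D}\cdot\beta_{i}^{E+D}$ equal $\alpha_{i}^{R}\cdot\beta_{i}^{R}$ even in the mixed case.

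With these ingredients in hand, each of the four sums decomposes into an $(E,E)$-block, a $(D,D)$-block, and four cross-blocks (non-mixed or mixed $E$ paired with $D$ on either side). The $(E,E)$- and $(D,D)$-blocks contribute the $R(T)$- and $K$-versions of each invariant after absorbing the appropriate $\omega\cdot W$ corrections coming from the definitions of $G_{ab}$ and $H_{ab}$. For (ii), only the mixed-$E$ cross-terms survive since the non-mixed pairs give $\alpha_{i}\cdot\alpha_{j}=0$, and summing produces exactly $\lambda_{a}(T)W_{b}(K;t^{-1})+\lambda_{b}(T)W_{a}(K;t)$. For (iii) one further expands the exponentials via the factorization $\sum\varepsilon_{i}\varepsilon_{j}t^{x_{i}+y_{j}}=(\sum\varepsilon_{i}t^{x_{i}})(\sum\varepsilon_{j}t^{y_{j}})$, subtracts $\omega_{b}(E+D)\cdot W_{a}(T+K;t)$, and applies Lemma~\ref{lem42} in the form $W_{a}(R(T);t)=U_{a}(T;t)+V_{a}(T;t)-\lambda_{a}(T)$; the $\omega_{\ast}(R(E))\cdot W_{\ast}(R(T);t)$ terms then cancel and the required $-\lambda_{b}(T)W_{a}(K;t)+V_{a}(T;t)W_{b}(K;t)$ remains. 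Statement (iv) then follows by the same strategy after first applying the identity $\beta_{i}\cdot\beta_{j}=\alpha_{j}\cdot\beta_{j}-\alpha_{i}\cdot\beta_{i}+\alpha_{i}\cdot\alpha_{j}$ to each exponent, reducing the new sum to $\alpha$-intersections already analyzed.

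The hardest part will be (iv): the identity just cited couples all three intersection types and forces the explicit treatment of six cross-cases (non-mixed and mixed $E$, each paired with $D$ on either side), with the writhe corrections now involving both $W_{a}(K;t)$ and $W_{a}(K;t^{-1})$, so that one must track two copies of Lemma~\ref{lem42} (at $t$ and at $t^{-1}$) to see the $\omega_{\ast}(K)\cdot W_{\ast}(R(T);t^{\pm 1})$ contributions collapse. A secondary technical point throughout is verifying, via a local analysis of the oriented smoothing at each $(A,B)$- and $(B,A)$-type crossing, that the $D$-portion of every mixed $\alpha_{i}^{E+D}$ is indeed traversed in the positive $+\gamma_{D}$ direction; this is what guarantees the correct signs $t^{\pm\alpha\cdot\beta}$ in the cross-intersections and, in particular, distinguishes the $W_{b}(K;t)$ and $W_{a}(K;t^{-1})$ contributions in (ii) and (iv).
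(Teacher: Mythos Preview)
Your overall architecture matches the paper's: form the connected-sum realization $(\Sigma_{g+h},E+D)$, observe $I_a(E+D)=I_a(R(E))\sqcup I_a(D)$, identify how the cycles $\alpha_i',\beta_i'$ differ from $\alpha_i,\beta_i$ according to whether $c_i$ is a non-mixed $E$-crossing, a mixed $E$-crossing, or a $D$-crossing, and then split each double sum into blocks. The paper records the resulting intersection numbers $\alpha_i'\cdot\alpha_j'$, $\alpha_i'\cdot\beta_j'$, $\beta_i'\cdot\beta_j'$ in three $3\times 3$ tables (its Lemma~4.4) and reads off (i)--(iv) directly from those tables, rather than passing through your identity $\beta_i\cdot\beta_j=\alpha_j\cdot\beta_j-\alpha_i\cdot\beta_i+\alpha_i\cdot\alpha_j$ for (iv); the two bookkeeping schemes are equivalent.

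The one genuine simplification you are missing, and which would save most of the labor you anticipate in (iii) and (iv), is that the paper first applies Reidemeister~I moves to arrange that \emph{both} $R(E)$ and $D$ are untwisted, i.e.\ $\omega_a(R(E))=\omega_a(D)=0$ for $a\in\{0,1\}$. With this normalization the correction terms in the definitions of $G_{ab}$ and $H_{ab}$ vanish for $R(E)$, $D$, and $E+D$ simultaneously, so one never has to ``track two copies of Lemma~\ref{lem42}'' or watch $\omega_\ast(K)\cdot W_\ast(R(T);t^{\pm1})$ contributions collapse: those terms are zero from the outset, and each cross-block reduces immediately to a product of a $U$-, $V$-, or $\lambda$-factor with a $W$-factor. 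Your route through the full $\omega$-corrections would still arrive at the same formulas, but at the cost of the extra cancellations you describe; the untwisted reduction removes that cost entirely.
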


We take a surface realization $(\Sigma_g,R(E))$ 
of $R(E)$. 
Let $c_1,\dots,c_m$ be the real crossings of $R(E)$, 
and $\alpha_i$ and $\beta_i$ $(1\leq i\leq m)$ 
the cycles at $c_i$ on $\Sigma_g$. 
We may assume that 
\[J_0(E)\cup J_1(E)=\{1,\dots,\ell\}
\mbox{ and }
J_0'(E)\cup J_1'(E)=\{\ell+1,\dots,m\}
\]
for some $\ell$. 
We also take a surface realization $(\Sigma_h,D)$ of $D$. 
Let $c_{m+1},\dots,c_n$ be the real crossings of $D$, 
and $\alpha_i$ and $\beta_i$ $(m+1\leq i\leq n)$ the cycles at $c_i$ on $\Sigma_h$. 

Taking a connected sum of 
$(\Sigma_g,R(E))$ and $(\Sigma_h,D)$
yields a surface realization 
$(\Sigma_{g+h},E+D)$ of $E+D$  
as shown in Figure~\ref{fig:surface-realization-generalized-product}. 
Let $\alpha_i'$ and $\beta_i'$ $(1\leq i\leq n)$ 
be the cycles at $c_i$ on $\Sigma_{g+h}$. 
We also regard the cycles $\alpha_i$ and $\beta_i$ 
as lying on $\Sigma_{g+h}$ ($1\leq i\leq n$). 
To prove Theorem~\ref{thm43}, 
we prepare the following lemma. 

\begin{figure}[htbp]
  \centering
    \begin{overpic}[]{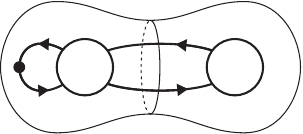}
      \put(37,29.5){$E$}
      \put(109,29.5){$D$}
    \end{overpic}
  \caption{The surface realization $(\Sigma_{g+h},E+ D)$}
  \label{fig:surface-realization-generalized-product}
\end{figure}

\begin{lemma}\label{lem44}
The intersection numbers $\alpha_i'\cdot\alpha_j'$, 
$\alpha_i'\cdot\beta_j'$, and $\beta_i'\cdot\beta_j'$ 
are given as shown in {\rm Tables~\ref{table41}--\ref{table43}}. 
\end{lemma}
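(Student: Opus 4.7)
The approach is to exploit the connected-sum structure of $\Sigma_{g+h}$. The connected sum near the basepoint introduces a separating simple closed curve $\tau$ that decomposes $\Sigma_{g+h}$ into two subsurfaces: $\Sigma_g^\circ$ containing the portion originating from $E$, and $\Sigma_h^\circ$ containing the portion from $D$. Any two cycles supported on opposite sides of $\tau$ have intersection number zero in $\Sigma_{g+h}$, while two cycles supported on the same side intersect as they did in the original surface $\Sigma_g$ or $\Sigma_h$. A second key ingredient is the identity $\gamma_{E+D}=\gamma_{R(E)}+\gamma_D$ as cycles on $\Sigma_{g+h}$, obtained by comparing the two long-knot curves through the connecting-tube region: the $R(E)$-curve (viewed on the $\Sigma_g$ side via the closure arc) and the $D$-curve (viewed on the $\Sigma_h$ side via its own basepoint segment) together reassemble into $\gamma_{E+D}$ up to a boundary supported near $\tau$.

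The plan is then to identify each $\alpha_i'$ and $\beta_i'$ as a combination of cycles supported on one side of $\tau$. For a crossing $c_i$ of $D$ ($i>m$), the smoothing is local to $\Sigma_h^\circ$, so $\alpha_i'$ coincides with the cycle $\alpha_i$ from $(\Sigma_h,D)$, and hence $\beta_i'=\gamma_{E+D}-\alpha_i'=\gamma_{R(E)}+\beta_i$. For a crossing $c_i$ of $E$ of type $(A,A;a)$ or $(B,B;a)$, the smoothing produces a loop entirely within a single string of $E$, which is contained in $\Sigma_g^\circ$ and does not meet the closure arc; hence $\alpha_i'=\alpha_i$ and $\beta_i'=\beta_i+\gamma_D$. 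For a crossing $c_i$ of $E$ of type $(A,B)$ or $(B,A)$, the oriented smoothing swaps the pairing of the $A$- and $B$-arcs at $c_i$, and one must determine case-by-case whether the component avoiding the basepoint also contains the closure arc; depending on this, one obtains either $\alpha_i'=\alpha_i$ (with $\beta_i'=\beta_i+\gamma_D$) or $\alpha_i'=\alpha_i+\gamma_D$ (with $\beta_i'=\beta_i$).

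With these identifications in hand, each product $\alpha_i'\cdot\alpha_j'$, $\alpha_i'\cdot\beta_j'$, and $\beta_i'\cdot\beta_j'$ expands into a sum in which every cross term pairing a cycle supported on $\Sigma_g^\circ$ with one supported on $\Sigma_h^\circ$ vanishes by separability across $\tau$. The surviving terms are intersection numbers computed either on $(\Sigma_g,R(E))$ or on $(\Sigma_h,D)$, which fills the entries of Tables~\ref{table41}--\ref{table43} by routine case analysis over the types of $c_i$ and $c_j$. The main obstacle is the case analysis for mixed-type crossings: at an $(A,B)$ or $(B,A)$ crossing of $E$, the interaction between the oriented smoothing and the closure arc determines whether an extra $\gamma_D$ appears in $\alpha_i'$ or in $\beta_i'$, and this must be tracked consistently across crossing signs and orientation conventions so that the resulting table entries agree. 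Once this subtlety is resolved, everything else reduces to substituting the expressions for $\alpha_i'$ and $\beta_i'$ and using the separability of $\tau$.
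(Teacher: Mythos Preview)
Your approach is the same as the paper's: write each $\alpha_i'$, $\beta_i'$ as a sum of the original $\alpha_i$, $\beta_i$ and one of $\gamma_D$, $\gamma_{R(E)}$, then expand the products and kill every cross term pairing a cycle on the $\Sigma_g$-side with one on the $\Sigma_h$-side. The paper records the identifications compactly as
\[
\begin{cases}
\alpha_i'=\alpha_i,\ \beta_i'=\beta_i+\gamma_D & (1\le i\le \ell),\\
\alpha_i'=\alpha_i+\gamma_D,\ \beta_i'=\beta_i & (\ell+1\le i\le m),\\
\alpha_i'=\alpha_i,\ \beta_i'=\beta_i+\gamma_{R(E)} & (m+1\le i\le n),
\end{cases}
\]
and then substitutes case by case.

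The one place you are overcautious is the mixed crossings $\ell+1\le i\le m$. There is no genuine case split: if $c_i$ lies between $A$ and $B$, then along the parametrization of $R(E)$ the two passages through $c_i$ occur once in $A$ and once in $B$, so the non-basepoint cycle $\alpha_i$ is exactly the subarc between them and \emph{always} contains the right-hand closure region, regardless of sign or of which strand is over. Hence one uniformly gets $\alpha_i'=\alpha_i+\gamma_D$ and $\beta_i'=\beta_i$ for all such $i$; the dichotomy you anticipate never arises. With that observation your ``main obstacle'' evaporates and the remainder is precisely the routine substitution you outline.
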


\begin{table}[htb]
\caption{The intersection numbers $\alpha_i'\cdot\alpha_j'$}
\label{table41}
\renewcommand{\arraystretch}{1.3}
\begin{tabular}{|c||c|c||c|}
\hline
 & $1\leq j\leq\ell$ & $\ell+1\leq j\leq m$ & $m+1\leq j\leq n$ \\
\hline\hline
$1\leq i\leq \ell$ & \multicolumn{2}{c||}{\multirow{2}{*}{$\alpha_i\cdot\alpha_j$}}
 & $0$ \\ \cline{1-1} \cline{4-4}
$\ell+1\leq i\leq m$ & \multicolumn{1}{c}{} & \multicolumn{1}{c||}{} &  $-\alpha_j\cdot\beta_j$ \\
\hline\hline
$m+1\leq i\leq n$ & $0$ & $\alpha_i\cdot\beta_i$ 
& $\alpha_i\cdot\alpha_j$ \\
\hline
\end{tabular}
\end{table}

\begin{table}[htb]
\caption{The intersection numbers $\alpha_i'\cdot\beta_j'$}
\label{table42}
\renewcommand{\arraystretch}{1.3}
\begin{tabular}{|c||c|c||c|}
\hline
 & $1\leq j\leq\ell$ & $\ell+1\leq j\leq m$ & $m+1\leq j\leq n$ \\
\hline\hline
$1\leq i\leq \ell$ & \multicolumn{2}{c||}{\multirow{2}{*}{$\alpha_i\cdot\beta_j$}}
 & $\alpha_i\cdot\beta_i$ \\ \cline{1-1} \cline{4-4}
$\ell+1\leq i\leq m$ & \multicolumn{1}{c}{} 
& \multicolumn{1}{c||}{} &  $\alpha_i\cdot\beta_i+\alpha_j\cdot\beta_j$ \\
\hline\hline
$m+1\leq i\leq n$ & $\alpha_i\cdot\beta_i$ & $0$ 
&  $\alpha_i\cdot\beta_j$ \\
\hline
\end{tabular}
\end{table}

\begin{table}[htb]
\caption{The intersection numbers $\beta_i'\cdot\beta_j'$}
\label{table43}
\renewcommand{\arraystretch}{1.3}
\begin{tabular}{|c||c|c||c|}
\hline
 & $1\leq j\leq\ell$ & $\ell+1\leq j\leq m$ & $m+1\leq j\leq n$ \\
\hline\hline
$1\leq i\leq \ell$ & \multicolumn{2}{c||}{\multirow{2}{*}{$\beta_i\cdot\beta_j$}}
 & $-\alpha_i\cdot\beta_i+\alpha_j\cdot\beta_j$ \\ \cline{1-1} \cline{4-4}
$\ell+1\leq i\leq m$ & \multicolumn{1}{c}{} 
& \multicolumn{1}{c||}{} &   $-\alpha_i\cdot\beta_i$\\
\hline\hline
$m+1\leq i\leq n$ & $-\alpha_i\cdot\beta_i+\alpha_j\cdot\beta_j$ 
& $\alpha_j\cdot\beta_j$ & $\beta_i\cdot\beta_j$ \\
\hline
\end{tabular}
\end{table}

\begin{proof}
It follows by definition that 
\begin{enumerate}
\item
$\alpha_i'=\alpha_i$ and 
$\beta_i'=\beta_i+\gamma_D$ 
for $1\leq i\leq \ell$, 
\item
$\alpha_i'=\alpha_i+\gamma_D$ and 
$\beta_i'=\beta_i$ for $\ell+1\leq i\leq m$, and 
\item
$\alpha_i'=\alpha_i$ and 
$\beta_i'=\beta_i+\gamma_{R(E)}$ 
for $m+1\leq i\leq n$. 
\end{enumerate}

We consider only the case of the intersection numbers 
$\alpha_i'\cdot\beta_j'$; 
the other cases are treated analogously. 

\underline{$1\leq i,j\leq m$.} 
Since $\alpha_i\cdot\gamma_D=\beta_i\cdot\gamma_D=0$, 
it follows from (i) and (ii) that 
\[\alpha_i'\cdot\beta_j'=
(\mbox{$\alpha_i$ or $\alpha_i+\gamma_D$})\cdot
(\mbox{$\beta_j+\gamma_D$ or $\beta_j$})
=\alpha_i\cdot\beta_j.\]

\underline{$1\leq i\leq \ell$ and $m+1\leq j\leq n$.} 
Since $\alpha_i\cdot\beta_j=0$, 
it follows from (i) and (iii) that 
\[\alpha_i'\cdot\beta_j'=
\alpha_i\cdot(\beta_j+\gamma_{R(E)})=\alpha_i\cdot\gamma_{R(E)}
=\alpha_i\cdot(\alpha_i+\beta_i)=\alpha_i\cdot\beta_i.
\]

\underline{$\ell+1\leq i\leq m$ and $m+1\leq j\leq n$.} 
Since $\alpha_i\cdot\beta_j=
\gamma_D\cdot\gamma_{R(E)}=0$, 
it follows from (ii) and (iii) that 
\[\alpha_i'\cdot\beta_j'=
(\alpha_i+\gamma_D)\cdot(\beta_j+\gamma_{R(E)})
=\alpha_i\cdot\gamma_{R(E)}+\gamma_D\cdot\beta_j
=\alpha_i\cdot\beta_i+\alpha_j\cdot\beta_j.\]

\underline{$m+1\leq i\leq n$ and $1\leq j\leq \ell$.} 
Since $\alpha_i\cdot\beta_j=0$, 
it follows from (i) and (iii) that 
\[\alpha_i'\cdot\beta_j'=
\alpha_i\cdot(\beta_j+\gamma_D)=\alpha_i\cdot\gamma_D
=\alpha_i\cdot\beta_i.\]

\underline{$m+1\leq i\leq n$ and $\ell+1\leq j\leq m$.} 
It follows from (ii) and (iii) that 
\[\alpha_i'\cdot\beta_j'=
\alpha_i\cdot\beta_j=0.\]

\underline{$m+1\leq i,j\leq n$.} 
Since $\alpha_i\cdot\gamma_{R(E)}=0$, 
it follows from (iii) that 
\[\alpha_i'\cdot\beta_j'=
\alpha_i\cdot(\beta_j+\gamma_{R(E)})=\alpha_i\cdot\beta_j.\]
\end{proof}

\begin{proof}[Proof of {\rm Theorem~\ref{thm43}}.]
(i) Since $I_a(E+D)=I_a(R(E))\cup I_a(D)$, 
it follows from Table~\ref{table42} that 
\begin{align*}
W_a(T+K;t)
&=\sum_{i\in I_a(E+D)} \varepsilon_i(t^{\alpha_i'\cdot\beta_i'}-1)\\
&=\sum_{i\in I_a(R(E))}
\varepsilon_i(t^{\alpha_i\cdot \beta_i}-1)+
\sum_{i\in I_a(D)}
\varepsilon_i(t^{\alpha_i\cdot \beta_i}-1)\\
&=W_a(R(T);t)+W_a(K;t).
\end{align*}

(ii) 
By Table~\ref{table41}, we have 
\begin{align*}
&F_{ab}(T+K;t)-F_{ab}(R(T);t)-F_{ab}(K;t)\\
&=
\sum_{\substack{i\in J_a'(E)\\j\in I_b(D)}}
\e_i\e_j(t^{-\alpha_j\cdot\beta_j}-1) 
+\sum_{\substack{i\in I_a(D)\\ j\in J_b'(E)}}
\e_i\e_j(t^{\alpha_i\cdot\beta_i}-1) \\
&=
\Biggl(\sum_{i\in J_a'(E)}\e_i\Biggr)
\Biggl(\sum_{j\in I_b(D)}\e_j(t^{-\alpha_j\cdot\beta_j}-1)\Biggr)
+\Biggl(\sum_{j\in J_b'(E)}\e_j\Biggr)
\Biggl(\sum_{i\in I_a(D)}\e_i(t^{\alpha_i\cdot\beta_i}-1)\Biggr)\\
&=
\lambda_a(T)\cdot W_b(K;t^{-1})
+\lambda_b(T)\cdot W_a(K;t). 
\end{align*}

(iii) By applying Reidemeister moves I to $E$ and $D$ if necessary, 
we may assume that $R(E)$ and $D$ are untwisted. 
By Table~\ref{table42}, 
we have 
\begin{align*}
&G_{ab}(T+K;t)-G_{ab}(R(T);t)-G_{ab}(K;t)\\
&=g_{ab}(E+D;t)-g_{ab}(R(E);t)-g_{ab}(D;t)\\
&=
\sum_{\substack{i\in J_a(E)\\j\in I_b(D)}}
\e_i\e_j(t^{\alpha_i\cdot\beta_i}-1)
+\sum_{\substack{i\in J_a'(E)\\j\in I_b(D)}}
\e_i\e_j(t^{\alpha_i\cdot\beta_i+\alpha_j\cdot\beta_j}-1) 
\\
&\quad
+\sum_{\substack{i\in I_a(D)\\ j\in J_b(E)}}
\e_i\e_j(t^{\alpha_i\cdot\beta_i}-1). 
\end{align*}

Since $D$ is untwisted, we have $\omega_b(D)=0$. 
Therefore, the first sum is equal to 
\[\Biggl(\sum_{j\in I_b(D)}\e_j\Biggr) 
\Biggl(\sum_{i\in J_a(E)}\e_i(t^{\alpha_i\cdot\beta_i}-1)\Biggr)
=\omega_b(D)\cdot U_a(T;t)=0.\]
By the same reason, the second sum is equal to 
\begin{align*}
&\Biggl(\sum_{i\in J_a'(E)}\e_it^{\alpha_i\cdot\beta_i}\Biggr)
\Biggl(\sum_{j\in I_b(D)}\e_j(t^{\alpha_j\cdot\beta_j}-1)\Biggr)
+\Biggl(\sum_{j\in I_b(D)}\e_j\Biggr)
\Biggl(\sum_{i\in J_a'(E)}\e_i(t^{\alpha_i\cdot\beta_i}-1)\Biggr)\\
&=V_a(T;t)\cdot W_b(K;t)
+\omega_b(D)\cdot (V_a(T;t)-\lambda_a(T))\\
&=V_a(T;t)\cdot W_b(K;t). 
\end{align*}
Moreover, since $R(E)$ is untwisted, 
we have 
$\sum_{j\in J_b(E)}\e_j+\lambda_b(T)=\omega_b(R(E))=0$. 
Therefore, 
the third sum is equal to 
\[\Biggl(\sum_{j\in J_b(E)}\e_j\Biggr)
\Biggl(\sum_{i\in I_a(D)}\e_i(t^{\alpha_i\cdot\beta_i}-1)\Biggr)
=-\lambda_b(T)\cdot W_a(K;t).\]
Therefore, the required equation follows.

(iv) We may assume that $R(E)$ and $D$ are untwisted. 
By Table~\ref{table43}, we have  
\begin{align*}
&H_{ab}(T+K;t)-H_{ab}(R(T);t)-H_{ab}(K;t)\\
&=h_{ab}(E+D;t)-h_{ab}(R(E);t)-h_{ab}(D;t)\\
&=
\sum_{\substack{i\in J_a(E)\\j\in I_b(D)}}
\e_i\e_j(t^{-\alpha_i\cdot\beta_i+\alpha_j\cdot\beta_j}-1)
+\sum_{\substack{i\in J_a'(E)\\j\in I_b(D)}}
\e_i\e_j(t^{-\alpha_i\cdot\beta_i}-1)\\
&\quad
+\sum_{\substack{i\in I_a(D)\\j\in J_b(E)}}
\e_i\e_j(t^{-\alpha_i\cdot\beta_i+\alpha_j\cdot\beta_j}-1)
+\sum_{\substack{i\in I_a(D)\\j\in J_b'(E)}}
\e_i\e_j(t^{\alpha_j\cdot\beta_j}-1).
\end{align*}

Since $R(E)$ and $D$ are untwisted, the first sum is equal to 
\begin{align*}
&\Biggl(\sum_{i\in J_a(E)}\e_i(t^{-\alpha_i\cdot\beta_i}-1)\Biggr)
\Biggl(\sum_{j\in I_b(D)}\e_j(t^{\alpha_j\cdot\beta_j}-1)\Biggr)\\
&\quad +\Biggl(\sum_{j\in I_b(D)}\e_j\Biggr)
\Biggl(\sum_{i\in J_a(E)}\e_i(t^{-\alpha_i\cdot\beta_i}-1)\Biggr)
+\Biggl(\sum_{i\in J_a(E)}\e_i\Biggr)
\Biggl(\sum_{j\in I_b(D)}\e_j(t^{\alpha_j\cdot\beta_j}-1)\Biggr)\\
&=U_a(T;t^{-1})\cdot W_b(K;t)+\omega_b(D)\cdot U_a(T;t^{-1})
-\lambda_a(T)\cdot W_b(K;t)\\
&=(U_a(T;t^{-1})-\lambda_a(T))\cdot W_b(K;t). 
\end{align*}

Since the third sum is obtained from the first sum 
by replacing $t$ with $t^{-1}$ and by exchanging $a$ and $b$, 
it is equal to 
$(U_b(T;t)-\lambda_b(T))\cdot W_a(K;t^{-1})$. 

On the other hand, since $D$ is untwisted, 
the second sum is equal to 
\[\Biggl(\sum_{j\in I_b(D)}\e_j\Biggr)
\Biggl(\sum_{i\in J_a'(E)}\e_i
(t^{-\alpha_i\cdot\beta_i}-1)\Biggr)
=\omega_b(D)\cdot (V_a(T;t^{-1})-\lambda_a(T))=0.\]
Similarly, the fourth sum is equal to zero. 
Therefore, the conclusion follows. 
\end{proof}

\section{The left closure of a tangle}\label{sec5}

For a diagram $E$ of a virtual $2$-string tangle $T$, 
let $L(E)$ denote the long virtual knot diagram 
obtained from $E$ by closing the pair of the left endpoints, 
as shown in Figure~\ref{fig:left-closure}. 
The {\it left closure} of $T$ is the long virtual knot 
presented by $L(E)$, and is denoted by $L(T)$. 

\begin{figure}[htbp]
  \centering
    \begin{overpic}[]{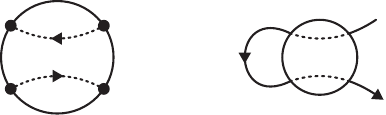}
      \put(-24,25.1){$E=$}
      \put(23.5,42.5){$B$}
      \put(23,6){$A$}
      \put(64,25.1){$\xrightarrow{\text{left closure}}$}
      \put(184,44){$\infty$}
      \put(187,4.5){$-\infty$}
      \put(150,24){$E$}
      \put(144,-8){$L(E)$}
    \end{overpic}
  \vspace{1em}
  \caption{The left closure}
  \label{fig:left-closure}
\end{figure}

\begin{definition}\label{def51}
A virtual $2$-string tangle $T$ is called {\it simply linked} 
if it is presented by a diagram $E$ 
with \[J_0(E)=J_1(E)=\emptyset,\] 
which means that any real crossing of $E$ occurs between $A$ and $B$. 
\end{definition}

\begin{lemma}\label{lem52}
For any long virtual knot $K$, 
there is a virtual $2$-string tangle $T$ 
such that 
\begin{enumerate}
\item
$T$ is simply linked, 
\item
$\lambda_0(T)=\lambda_1(T)=0$, 
\item
$R(T)=K$, 
\item
$\widehat{L(T)}=\widehat{K}$, and 
\item
$sg_1(L(T))\leq sg_1(K)$.
\end{enumerate}
\end{lemma}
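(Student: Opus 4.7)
My plan is to construct $T$ by cutting a suitable diagram of $K$ at an auxiliary non-crossing point, turning the two resulting arcs into the strings $A$ and $B$ of a simply linked tangle.

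First I would take a surface realization $(\Sigma_{g}, D)$ of $K$ with $g = sg_{1}(K)$, and then add Reidemeister~I moves as needed to ensure $D$ is untwisted, i.e., $\omega_0(D) = \omega_1(D) = 0$; this does not change the supporting surface. Next, I would produce a (possibly modified) diagram $D'$ of $K$ on the same surface $\Sigma_g$ together with a non-crossing point $x_1 \in D'$ such that, along the traversal from $-\infty$ to $+\infty$, the two passes through each real crossing of $D'$ are separated by $x_1$, equivalently, the initial arc from $-\infty$ to $x_1$ and the final arc from $x_1$ to $+\infty$ each contain exactly one strand of every real crossing.

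Given such an $x_1$, I would cut $D'$ at $x_1$ to form a tangle diagram $E$, taking the arc from $-\infty$ to $x_1$ as the lower string $A$ (from the lower-left to the lower-right endpoint) and the arc from $x_1$ to $+\infty$ as the upper string $B$ (from the upper-right to the upper-left endpoint, following the orientation of $D'$). Let $T$ be the virtual $2$-string tangle class of $E$. The five properties then follow directly: (i) every real crossing of $E$ involves one strand of $A$ and one of $B$ by the alternating property of $x_1$, so $J_0(E) = J_1(E) = \emptyset$; (ii) since $T$ is simply linked, $I_a(R(E)) = J_a'(E)$, whence $\lambda_a(T) = \omega_a(R(E)) = \omega_a(D') = 0$; (iii) right-closing $E$ reglues the cut at $x_1$ and recovers $D'$, so $R(T) = K$; (iv) left-closing $E$ reglues the basepoint of $D'$ and produces a long virtual knot whose closure is $\widehat{D'} = \widehat{K}$; (v) this $L(T)$ inherits a surface realization on $\Sigma_g$ with new basepoint $x_1$, giving $sg_1(L(T)) \leq sg_1(K)$.

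The hard part will be producing the modified diagram $D'$ with an alternating cut point. The condition on $x_1$ is a combinatorial constraint on the Gauss sequence of $D'$ that is not automatic for a given diagram, so I expect to need to insert additional crossings (via Reidemeister~II moves) together with suitable virtual detours along $\Sigma_g$ near every crossing whose two passes initially lie on the same side of a tentative $x_1$; these modifications preserve both the long virtual knot type $K$ and the surface $\Sigma_g$, so both the untwisted condition used in~(ii) and the genus bound in~(v) survive.
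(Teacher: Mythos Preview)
Your overall strategy is the same as the paper's: pick a second point on a diagram of $K$ so that every real crossing has its two passes separated by that point, cut there to obtain a simply linked tangle, and read off (i)--(v). Your derivation of (i)--(v) from the existence of such an ``alternating'' cut point $x_1$ is correct and is exactly what the paper does with the points $p$ and $q$.

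The gap is in producing the alternating cut point. Your proposed mechanism---inserting extra crossings via Reidemeister~II together with ``virtual detours'' near each offending crossing---does not clearly achieve the goal, and it jeopardizes~(ii). A Reidemeister~II move on a long diagram always creates one type-$0$ crossing and one type-$1$ crossing of opposite signs, so it shifts $\omega_0$ and $\omega_1$ by $\pm 1$ in opposite directions; hence an unspecified sequence of R-II moves need not preserve the untwisted condition $\omega_0=\omega_1=0$ that you use to conclude $\lambda_0(T)=\lambda_1(T)=0$. Separately, a modification localized near a crossing $c$ cannot move either pass of $c$ across $x_1$; that is a global constraint on the Gauss word, so ``near $c$'' is the wrong place to work.

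The paper's procedure is more direct and sidesteps both issues. It places the second point $q$ immediately after the basepoint $p$, so that the arc $A$ from $p$ to $q$ initially has \emph{no} self-crossings. Then it repeatedly takes the first $B$-$B$ crossing $c$ encountered along $B$ after $q$ and slides $c$ across $q$ by an explicit local sequence of Reidemeister moves (dragging one strand of $c$ past the intervening $A$-$B$ crossings and past $q$). One checks from the local picture that each iteration turns that $B$-$B$ crossing into an $A$-$B$ crossing, introduces no $A$-$A$ crossing, and preserves both $\omega_0$ and $\omega_1$. The process terminates with a simply linked, untwisted diagram on the same $\Sigma_g$, from which your verification of (i)--(v) goes through verbatim.
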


\begin{proof}
Let $(\Sigma_g,D)$ be a surface realization of $K$ 
with $sg_1(K)=g$, 
and $p$ the basepoint on $D$. 
By applying Reidemeister moves I if necessary, 
we may assume that $D$ is untwisted; 
that is, $\omega_0(D)=\omega_1(D)=0$.

First, we choose a point $q$ on $D$ 
ahead of $p$ with respect to the orientation of $D$. 
Let $A$ and $B$ be the subarcs of $D$ 
such that $A$ runs from $p$ to $q$ 
and $B$ runs from $q$ to $p$. 
Note that there is no real crossing between two paths of $A$. 
While traveling $B$, 
let $c$ be the crossing between two paths of $B$ closest to $q$. 
Next, we move $c$ to a position behind $q$ by 
a finite sequence of Reidemeister moves 
as shown in Figure~\ref{fig:pf-lem112}, 
where $B$ is indicated by a thick line. 
This procedure reduces the number of crossings between two paths of $B$, 
introduces no new real crossing between two paths of $A$, 
and preserves both the $0$- and $1$-writhes of $D$. 
By repeating this process, 
we finally obtain an untwisted diagram $D$ on $\Sigma_g$ 
with two points $p$ and $q$ 
such that any crossing occurs between $A$ and $B$. 

\begin{figure}[htbp]
  \centering
    \medskip
    \begin{overpic}[]{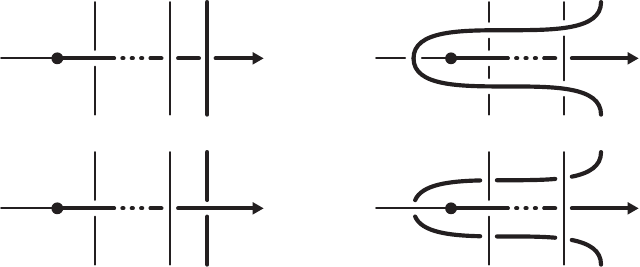}
      \put(-8,99){$p$}
      \put(8,87){$A$}
      \put(25.3,107){$q$}
      \put(41.5,132){$A$}
      \put(78,132){$A$}
      \put(96,132){$B$}
      \put(108.5,87){$B$}
      \put(105,105.5){$c$}
      \put(130,99){$p$}
      \put(146,95.2){{\Huge $\sim$}}
      \put(172.5,99){$p$}
      \put(185,87){$A$}
      \put(191.5,105.5){$c$}
      \put(215,106.5){$q$}
      \put(231,132){$A$}
      \put(267.5,132){$A$}
      \put(286,132){$B$}
      \put(289,87){$B$}
      \put(310.5,99){$p$}
      \put(-8,27){$p$}
      \put(8,15){$A$}
      \put(25.3,35){$q$}
      \put(40.5,-12){$A$}
      \put(77,-12){$A$}
      \put(95,-12){$B$}
      \put(108.5,15){$B$}
      \put(105,34){$c$}
      \put(130,27){$p$}
      \put(146,23){{\Huge $\sim$}}
      \put(172.5,27){$p$}
      \put(185,15){$A$}
      \put(191.5,33){$c$}
      \put(215,34){$q$}
      \put(230,-12){$A$}
      \put(266.5,-12){$A$}
      \put(285,-12){$B$}
      \put(289,15){$B$}
      \put(310.5,27){$p$}
    \end{overpic}
  \vspace{1em}
  \caption{Moving $c$ to a position behind $q$}
  \label{fig:pf-lem112}
\end{figure}

Let $E$ be a virtual $2$-string tangle diagram 
obtained from the resulting diagram $D$ 
by cutting it at $p$ and $q$, 
and $T$ the tangle presented by $E$. 
Since there is no crossing between two paths of $A$ 
or two paths of $B$, 
we have $J_0(E)=J_1(E)=\emptyset$, 
which means that $T$ is simply linked. 
Moreover, we obviously have $R(T)=K$, and 
\[\lambda_a(T)=\lambda_a(E)=\omega_a(D)=0\] 
for any $a\in\{0,1\}$. 

Since the closure of $L(T)$ is presented by $D$ 
with the basepoint $q$, 
we have $\widehat{L(T)}=\widehat{K}$  
and $sg_1(L(T))\leq g=sg_1(K)$. 
\end{proof}

\begin{lemma}\label{lem53}
If a simply linked tangle $T$
stasifies $\lambda_0(T)=\lambda_1(T)=0$, 
then 
\[F_{ab}(L(T);t)=H_{a'b'}(R(T);t)\mbox{ and }
H_{ab}(L(T);t)=F_{a'b'}(R(T);t)\]
hold for any $a,b\in\{0,1\}$, where $a'=1-a$ and $b'=1-b$. 
\end{lemma}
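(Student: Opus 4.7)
The plan is to realize both $R(T)$ and $L(T)$ on a single closed surface $\Sigma_g$ so that the smoothing cycles may be directly compared. First I would choose a diagram $E$ of $T$ with $J_0(E)=J_1(E)=\emptyset$ and pick a surface realization $(\Sigma_g,R(E))$ with basepoint $p_R$. In this realization the closed virtual knot $\widehat{R(T)}=\widehat{L(T)}$ on $\Sigma_g$ naturally decomposes as the tangle $E$, the right closure arc $c_R$, and a left closure arc $c_L$ (namely the arc through $p_R$ that completes $R(E)$ to a closed curve). The same $\Sigma_g$ then serves as a surface realization of $L(E)$ by relocating the basepoint from $p_R\in c_L$ to a point $p_L\in c_R$.

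Next I would establish three preliminary facts. First, $\omega_a(R(E))=\lambda_a(T)=0$ and $\omega_a(L(E))=\lambda_{a'}(T)=0$, so both $R(E)$ and $L(E)$ are untwisted and the correction terms in both $H$-polynomials vanish. Second, moving the basepoint reverses the order in which the two strings $A$ and $B$ are traversed, which flips the type of every crossing in $J_0'(E)\cup J_1'(E)$; hence $I_a(L(E))=J_{a'}'(E)=I_{a'}(R(E))$. Third, and most delicate: for each crossing $c_i$ the oriented smoothing in $\Sigma_g$ is a local operation independent of the basepoint, and so produces the same ordered pair of cycles. Since $p_R\in c_L$ and $p_L\in c_R$, the cycle containing $c_L$ is $\beta_i$ in $R(E)$ but $\tilde\alpha_i$ in $L(E)$, while the cycle containing $c_R$ is $\alpha_i$ in $R(E)$ but $\tilde\beta_i$ in $L(E)$. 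Therefore $\alpha_i=\tilde\beta_i$ and $\beta_i=\tilde\alpha_i$ as oriented cycles, giving $\tilde\alpha_i\cdot\tilde\alpha_j=\beta_i\cdot\beta_j$ and $\tilde\beta_i\cdot\tilde\beta_j=\alpha_i\cdot\alpha_j$.

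With these ingredients the two equalities follow by direct substitution. For the first, $F_{ab}(L(T);t)=\sum_{i\in I_a(L(E)),\,j\in I_b(L(E))}\varepsilon_i\varepsilon_j(t^{\tilde\alpha_i\cdot\tilde\alpha_j}-1)$ becomes $\sum_{i\in I_{a'}(R(E)),\,j\in I_{b'}(R(E))}\varepsilon_i\varepsilon_j(t^{\beta_i\cdot\beta_j}-1)$ after replacing $\tilde\alpha_i\cdot\tilde\alpha_j$ by $\beta_i\cdot\beta_j$ and relabeling types, and this equals $H_{a'b'}(R(T);t)$ because the correction terms vanish by $R(E)$ untwisted. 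Similarly, the correction terms in $H_{ab}(L(T);t)$ vanish by $L(E)$ untwisted, after which substituting $\tilde\beta_i\cdot\tilde\beta_j=\alpha_i\cdot\alpha_j$ and relabeling types yields $F_{a'b'}(R(T);t)$.

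The hard part will be the third preliminary fact, namely verifying $\alpha_i=\tilde\beta_i$ and $\beta_i=\tilde\alpha_i$ as oriented cycles. I would make this precise by writing each cycle explicitly as a union of the subarcs $A_1,A_2,B_1,B_2$ of $A,B$ cut at $c_i$, together with either $c_L$ or $c_R$. Simple linkedness of $T$ ensures that every crossing lies between $A$ and $B$, so every smoothing cleanly separates "left-side" material (containing $c_L$) from "right-side" material (containing $c_R$); the orientations match between $R(E)$ and $L(E)$ because the string orientations of $A$ and $B$ in $E$ are preserved when the basepoint is moved.
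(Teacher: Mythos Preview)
Your proposal is correct and follows essentially the same approach as the paper: choose $E$ with $J_0(E)=J_1(E)=\emptyset$, realize $R(E)$ and $L(E)$ on a common surface differing only by basepoint location, observe that $I_a(L(E))=J_{a'}'(E)=I_{a'}(R(E))$ and that the smoothing cycles swap roles ($\tilde\alpha_i=\beta_i$, $\tilde\beta_i=\alpha_i$), and then substitute directly. Your write-up is in fact more careful than the paper's, which simply asserts the cycle swap and the index-set identity without justification and does not explicitly note that $L(E)$ is untwisted (a point you correctly identify as needed for the $H_{ab}(L(T);t)$ computation).
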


\begin{proof}
Let $E$ be a diagram of $T$ with 
$J_0(E)=J_1(E)=\emptyset$. 
We regard the real crossings $c_1,\dots,c_m$ of $E$ 
as those of the right closure $R(E)$. 
Note that $R(E)$ is untwisted 
by $\lambda_0(E)=\lambda_1(E)=0$. 

For each $i$ with $1\leq i\leq m$, 
let $c_i'$ denote the crossing of the left closure $L(E)$ corresponding to 
$c_i$, 
and $\alpha_i'$ and $\beta_i'$ the cycles 
at $c_i'$ of $L(E)$. 
Then it holds that $\alpha_i'=\beta_i$ and $\beta_i'=\alpha_i$ 
for $1\leq i\leq m$, and 
\[I_a(L(E))=J_{a'}(E)=I_{a'}(R(E)).\]
Therefore, we have  
\begin{align*}
F_{ab}(L(T);t)=f_{ab}(L(E);t)
&=\sum_{i\in I_a(L(E)), j\in I_b(L(E))}\e_i\e_j
(t^{\alpha_i'\cdot\alpha_j'}-1)\\
&=\sum_{i\in I_{a'}(R(E)), j\in I_{b'}(R(E))}\e_i\e_j
(t^{\beta_i\cdot\beta_j}-1)\\
&=h_{a'b'}(R(E);t)=H_{a'b'}(R(T);t).
\end{align*}
Similarly, we have $H_{ab}(L(T);t)=F_{a'b'}(R(T);t)$. 
\end{proof}

\begin{proposition}\label{prop54}
For any long virtual knot $K$, 
there exists a long virtual knot $K'$ such that 
\begin{enumerate}
\item
$F_{ab}(K';t)=H_{a'b'}(K;t)$, 
\item
$H_{ab}(K';t)=F_{a'b'}(K;t)$, and 
\item
$sg_1(K')\leq sg_1(K)$ 
\end{enumerate}
for any $a,b\in\{0,1\}$, where $a'=1-a$ and $b'=1-b$. 
\end{proposition}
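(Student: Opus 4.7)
The plan is essentially immediate from Lemmas~\ref{lem52} and \ref{lem53}. Given a long virtual knot $K$, I would apply Lemma~\ref{lem52} to obtain a virtual $2$-string tangle $T$ that is simply linked, has $\lambda_0(T)=\lambda_1(T)=0$, satisfies $R(T)=K$, and has $sg_1(L(T))\leq sg_1(K)$. Then I would set $K':=L(T)$.

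With this choice, Lemma~\ref{lem53} applied to $T$ yields
\[
F_{ab}(K';t)=F_{ab}(L(T);t)=H_{a'b'}(R(T);t)=H_{a'b'}(K;t),
\]
which gives (i), and analogously
\[
H_{ab}(K';t)=H_{ab}(L(T);t)=F_{a'b'}(R(T);t)=F_{a'b'}(K;t),
\]
which gives (ii). Condition (iii), namely $sg_1(K')\leq sg_1(K)$, is the final assertion of Lemma~\ref{lem52}.

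Thus the substantive content of the proposition is already packaged in the preparatory lemmas: Lemma~\ref{lem52} produces a tangle presentation of $K$ in which every real crossing is mixed between the two strands and both linking numbers vanish, while Lemma~\ref{lem53} shows that swapping right for left closure exchanges the $F$ and $H$ polynomials and flips the type indices via $a\mapsto 1-a$, $b\mapsto 1-b$. Since both lemmas are in hand, no additional argument is needed; there is no genuine obstacle at this step, and the only verification required is to line up the substitutions correctly.
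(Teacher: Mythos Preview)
Your proof is correct and is essentially identical to the paper's own proof: apply Lemma~\ref{lem52} to produce a simply linked tangle $T$ with $R(T)=K$, $\lambda_0(T)=\lambda_1(T)=0$, and $sg_1(L(T))\leq sg_1(K)$, set $K'=L(T)$, and invoke Lemma~\ref{lem53}.
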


\begin{proof}
By Lemma~\ref{lem52}, 
we may choose a simply linked tangle $T$ 
with $R(T)=K$ and $\lambda_0(T)=\lambda_1(T)=0$. 
Let $K'=L(T)$. 
The conclusion then follows from Lemmas~\ref{lem52} and \ref{lem53}.
\end{proof}

\section{Characterization of the writhe polynomials}\label{sec6}

In \cite{HNNS2}, the writhe polynomials 
$W_0(K;t)$ and $W_1(K;t)$ are characterized as follows. 

\begin{lemma}[{\cite[Proposition 4.4]{HNNS2}}]\label{lem61}
Let $f(t)\in{\Z}[t,t^{-1}]$ be a Laurent polynomial. 
Then the following are equivalent. 
\begin{enumerate}
\item
There exists a long virtual knot $K$ with $W_0(K;t)=f(t)$. 
\item
$f(1)=0$. 
\end{enumerate}
The same equivalence holds for $W_1$. 
\qed
\end{lemma}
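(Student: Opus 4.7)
The plan is to prove the two implications separately. The forward direction is immediate: substituting $t=1$ into the defining formula gives
\[ W_0(K;1)=\sum_{i\in I_0(D)}\varepsilon_i(1^{\alpha_i\cdot\beta_i}-1)=0, \]
and the same substitution works for $W_1$.

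For the converse, given $f(t)\in\Z[t,t^{-1}]$ with $f(1)=0$, I would write $f(t)=\sum_{k}c_kt^k$ with $\sum_{k}c_k=0$, and use this to re-express $f(t)=\sum_{k}c_k(t^k-1)$. By the additivity of the writhe polynomial under the concatenation product (Lemma~\ref{lem24}(i)), it suffices to construct, for each integer $k$ and each sign $\varepsilon\in\{\pm 1\}$, an \emph{elementary} long virtual knot $K_{k,\varepsilon}$ with
\[ W_0(K_{k,\varepsilon};t)=\varepsilon(t^k-1)\quad\text{and}\quad W_1(K_{k,\varepsilon};t)=0. \]

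Next, I would realize such $K_{k,\varepsilon}$ on the torus $\Sigma_1$, using a meridian $\mu$ and longitude $\lambda$ with $\mu\cdot\lambda=1$. Choose a diagram $D$ consisting of a single real crossing $c_1$ of sign $\varepsilon$, arranged so that (a) traveling from the basepoint we meet $c_1$ first as an overcrossing and then as an undercrossing (so $c_1$ is of type $0$ and $I_1(D)=\emptyset$), and (b) one of the two cycles produced by smoothing at $c_1$ represents $k\mu$ in $H_1(\Sigma_1)$, while $\gamma_D$ represents $\lambda$. Then $\alpha_1\cdot\beta_1=\alpha_1\cdot\gamma_D=k$, giving the desired $W_0$ contribution; since no crossing is of type $1$, we get $W_1=0$. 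Concretely, a pair of spiraling strands on the torus like those used for $K_n$ and $K_n'$ in the proof of Theorem~\ref{thm38} provides the needed flexibility in $k$.

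Finally, the concatenation $K=\bigcirc_{k}(K_{k,\,\mathrm{sgn}(c_k)})^{\circ|c_k|}$ has $W_0(K;t)=\sum_{k}c_k(t^k-1)=f(t)$ and $W_1(K;t)=0$ by Lemma~\ref{lem24}(i). The statement for $W_1$ is proved identically, by instead choosing $c_1$ to be of type $1$ in each elementary block. The main obstacle is the block construction: one must simultaneously pin down the \emph{type}, the \emph{sign}, and the \emph{intersection number} $\alpha_1\cdot\beta_1=k$ at a single crossing. Once the torus picture is set up with the two smoothing cycles identified with $k\mu$ and $\gamma_D$ with $\lambda$, the rest is routine additivity.
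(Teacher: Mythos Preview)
Your overall strategy---write $f(t)=\sum_k c_k(t^k-1)$ and concatenate elementary blocks via Lemma~\ref{lem24}(i)---is correct, and it is precisely the route the paper takes for the stronger Proposition~\ref{prop62} (Lemma~\ref{lem61} itself is only cited from \cite{HNNS2}, not reproved). The forward direction is fine.

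The gap is your single-crossing block $K_{k,\varepsilon}$. If a surface realization has exactly one real crossing $c_1$, then the oriented smoothing at $c_1$ yields two \emph{disjoint embedded} curves $\alpha_1,\beta_1$ on the surface: there are no remaining crossings for them to pass through or to meet each other in. Disjoint curves on any oriented surface have algebraic intersection number zero, so $\alpha_1\cdot\beta_1=0$ and hence $W_0(K_{k,\varepsilon};t)=\varepsilon(t^0-1)=0$ no matter how the picture is arranged. Concretely, on $\Sigma_1$ two disjoint essential simple closed curves are necessarily isotopic, so you cannot have $\alpha_1$ represent $k\mu$ while $\beta_1$ pairs nontrivially with it. Your pointer to the knots $K_n$ of Theorem~\ref{thm38} does not rescue this: those have $2n$ crossings and give $W_0(K_n;t)=n(t-1)$, not $t^k-1$. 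The correct building block is $J_n$ of Lemma~\ref{lem63}, which has $2n$ real crossings (only one of type~$0$) and satisfies $W_0(J_n;t)=t^n-1$; the extra type-$1$ crossings are precisely what allow $\alpha_1$ and $\beta_1$ to intersect nontrivially after smoothing at $c_1$. Negative exponents and negative coefficients are then obtained from the symmetries of Lemma~\ref{lem22} (packaged as Lemma~\ref{lem64}). Finally, your side condition $W_1(K_{k,\varepsilon};t)=0$ is unnecessary for the statement and is in fact not satisfied by $J_n$.
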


In this section, we prove the following, 
which is slightly stronger than Lemma~\ref{lem61}. 

\begin{proposition}\label{prop62}
Let $f(t)\in{\Z}[t,t^{-1}]$ be a Laurent polynomial. 
Then the following are equivalent. 
\begin{enumerate}
\item
There eixsts a long virtual knot $K$ with $W_0(K;t)=f(t)$ and $sg_2(K)=0$. 
\item
$f(1)=0$. 
\end{enumerate}
The same equivalence holds for $W_1$. 
\end{proposition}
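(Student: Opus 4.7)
The forward direction (i) $\Rightarrow$ (ii) is immediate: substituting $t=1$ into the formula $W_0(K;t)=\sum_i \varepsilon_i(t^{\alpha_i\cdot\beta_i}-1)$ yields $W_0(K;1)=0$, with no hypothesis on $sg_2$ required. For the converse, since $f(1)=0$ we may write $f(t)=\sum_{k\in S}a_k(t^k-1)$ with $S\subset\Z$ finite and $a_k\in\Z\setminus\{0\}$. The plan is to realize each summand $\varepsilon(t^k-1)$, $\varepsilon\in\{\pm 1\}$, as $W_0$ of an explicit annular long virtual knot, and then combine these via the concatenation product, applying Lemma~\ref{lem24}(i) to sum the contributions.

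For the building blocks, for each $k\in\Z$ and $\varepsilon\in\{\pm 1\}$ I construct $L_{k,\varepsilon}$ presented by a diagram on the annulus $\Sigma_{0,2}$ with a single real crossing of type $0$ and sign $\varepsilon$. The diagram is arranged so that the middle segment of the knot --- the portion between its two visits to the crossing --- winds around the core of the annulus $k$ times in a chosen direction, so that $sg_2(L_{k,\varepsilon})=0$ by construction. After identifying the two boundary components of $\Sigma_{0,2}$ to form the torus $\Sigma_1$, the smoothed cycle $\alpha$ (the one not containing the basepoint) represents $k\mu$ in $H_1(\Sigma_1)$, where $\mu$ is the class of the core. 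Since the knot class $\gamma_D$ satisfies $\gamma_D\cdot\mu=\pm 1$ (the knot traverses the annulus once from one boundary to the other), we obtain $\alpha\cdot\beta=\alpha\cdot\gamma_D=\pm k$; with the appropriate orientation conventions this gives $W_0(L_{k,\varepsilon};t)=\varepsilon(t^k-1)$.

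Concatenating $|a_k|$ copies of $L_{k,\mathrm{sign}(a_k)}$ for each $k\in S$ and applying Lemma~\ref{lem24}(i) produces a long virtual knot $K$ with $W_0(K;t)=\sum_k a_k(t^k-1)=f(t)$. The condition $sg_2=0$ is preserved under concatenation because gluing two copies of $\Sigma_{0,2}$ along one boundary circle each (identifying the matched knot endpoints) produces a compact orientable surface with two boundary components and Euler characteristic zero --- another annulus --- giving a realization of $K$ on $\Sigma_{0,2}$. The $W_1$ case is completely parallel, with type $1$ crossings replacing type $0$ throughout. The main technical obstacle is the explicit diagrammatic construction of $L_{k,\varepsilon}$ and the verification that $\alpha\cdot\beta=\pm k$ on $\Sigma_1$; this is in the same spirit as the families of Theorem~\ref{thm38} and is most cleanly presented with a figure.
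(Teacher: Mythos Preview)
Your overall strategy matches the paper's: realize each $\pm(t^k-1)$ by an explicit annular long virtual knot and then concatenate, using Lemma~\ref{lem24}(i) for additivity and the fact that gluing two annuli along a boundary circle yields an annulus for the $sg_2$ bound. However, the building block $L_{k,\varepsilon}$ with a \emph{single} real crossing does not exist for any $k\ne 0$.

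The reason is this. If a surface realization on $\Sigma_{0,2}$ has exactly one real crossing, then smoothing that crossing yields a disjoint union of an embedded closed curve $\alpha$ and an embedded arc $\beta$ joining the two boundary circles, with no remaining double points whatsoever. Every embedded closed curve on an annulus is either nullhomotopic or isotopic to the core; in the latter case it separates the two boundary components, so the disjoint arc $\beta$ could not connect them. Hence $[\alpha]=0$ in $H_1(\Sigma_{0,2})$, and after identifying the boundaries to form $\Sigma_1$ one still has $[\alpha]=0$, giving $\alpha\cdot\gamma_D=0$ and $W_0(L;t)=\varepsilon(t^0-1)=0$. In particular the claim that the middle segment can be made to wind $k$ times around the core while incurring only one crossing is false for every $k\ne 0$: any such winding forces additional self-intersections of the diagram on $\Sigma_{0,2}$.

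The paper's remedy is the family $J_n$ of Lemma~\ref{lem63}, whose annular realization carries $2n$ real crossings (exactly one of type~$0$); the extra crossings allow $\alpha_1$ to be an \emph{immersed} curve winding $n$ times, so that $\alpha_1\cdot\beta_1=n$ and $W_0(J_n;t)=t^n-1$. Negative exponents and negative coefficients are then produced via the symmetries $K\mapsto K^{\#*}$ and $K\mapsto K^{\#}$ (Lemma~\ref{lem64}), which preserve $sg_2=0$ thanks to Lemma~\ref{lem36}(i). Your concatenation argument and the closure of $sg_2=0$ under products are correct; only the elementary piece needs to be replaced.
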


We remark that, if $f(t)\ne 0$, 
then any long virtual knot $K$ in Proposition~\ref{prop62}(i) 
satisfies $sg_1(K)=1$; 
in fact, $K$ is nonclassical. 

For $n\geq 1$, 
let $J_n$ denote the long virtual knot 
presented by a diagram $D_n$ 
with $2n$ real crossings $c_1,\dots,c_{2n}$, 
as shown at the top of Figure~\ref{fig:characterization-w}. 

\begin{figure}[htbp]
  \centering
    \begin{overpic}[]{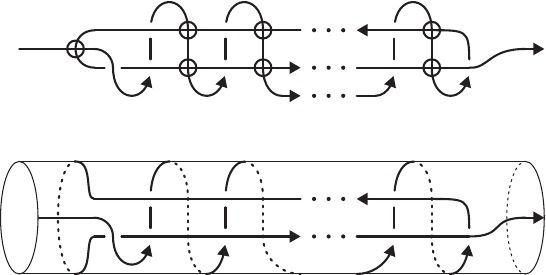}
      \put(61,124){$c_{3}$}
      \put(98,124){$c_{5}$}
      \put(165,124){$c_{2n-1}$}
      \put(45,92){$c_{1}$}
      \put(75.5,92){$c_{2}$}
      \put(112,92){$c_{4}$}
      \put(194,88){$\nwarrow$}
      \put(204,78){$c_{2n-2}$}
      \put(230,92){$c_{2n}$}
      \put(61,43){$c_{3}$}
      \put(98,43){$c_{5}$}
      \put(165,43){$c_{2n-1}$}
      \put(45,11){$c_{1}$}
      \put(75.5,11){$c_{2}$}
      \put(112,11){$c_{4}$}
      \put(193,11){$c_{2n-2}$}
      \put(230,11){$c_{2n}$}
    \end{overpic}
  \caption{The long virtual knot $J_n$}
  \label{fig:characterization-w}
\end{figure}

\begin{lemma}\label{lem63}
The long virtual knot $J_n$ $(n\geq 1)$ 
satisfies the following. 
\begin{enumerate}
\item $W_0(J_n;t)=W_1(J_n;t)=t^n-1$. 
\item $F_{ab}(J_n;t)=G_{ab}(J_n;t)=0$ for any $a,b\in\{0,1\}$. 
\item $H_{ab}(J_n;t)=-t^n+2-t^{-n}$ for any $a,b\in\{0,1\}$. 
\item $sg_1(J_n)=1$ and $sg_2(J_n)=0$. 
\end{enumerate}
\end{lemma}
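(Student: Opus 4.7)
The plan is to extract everything from Lemma~\ref{lem36}, which gives very strong constraints when $sg_2=0$. First I would verify from the bottom picture in Figure~\ref{fig:characterization-w} that $J_n$ admits a surface realization on the annulus $\Sigma_{0,2}$, so $sg_2(J_n)\le 0$ and hence $sg_2(J_n)=0$. This single observation reduces most of the lemma to the computation of a writhe polynomial: by Lemma~\ref{lem36}(i) it suffices to compute $W_0(J_n;t)$, and by parts (ii)--(iii) of the same lemma we will get $F_{ab}=G_{ab}=0$ and $H_{ab}(J_n;t)=W_0(J_n;t)\,W_0(J_n;t^{-1})$ for free.

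Next I would compute $W_0(J_n;t)$ directly from the annulus realization. In this realization each cycle $\alpha_i$ is a multiple $k_i\mu$ of the meridian $\mu$ of $\Sigma_{0,2}$, and $\alpha_i\cdot\beta_i=\alpha_i\cdot\gamma_D$ records how many times $\alpha_i$ wraps around the annulus relative to the whole knot cycle. I would read off, crossing by crossing, the signs $\varepsilon_i$, the types ($0$ or $1$), and the integers $\alpha_i\cdot\beta_i$ for $c_1,\dots,c_{2n}$, grouping them into the pairs $(c_{2k-1},c_{2k})$ visible in the picture. The arrangement of over/under arcs is such that within each pair one crossing of type $0$ contributes a monomial while the partner contributes the cancelling constant, and summing over $k=1,\dots,n$ collapses to $W_0(J_n;t)=t^n-1$. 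Part (i) for $W_1$ then follows from Lemma~\ref{lem36}(i). Part (ii) is immediate from Lemma~\ref{lem36}(ii), and part (iii) is the one-line computation
\[
H_{ab}(J_n;t)=W_0(J_n;t)\,W_0(J_n;t^{-1})=(t^n-1)(t^{-n}-1)=-t^n+2-t^{-n}.
\]

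For part (iv), $sg_2(J_n)=0$ is already established. Lemma~\ref{lem37} then gives $sg_1(J_n)\le sg_2(J_n)+1=1$. If we had $sg_1(J_n)=0$, Lemma~\ref{lem33}(i) would force $W_0(J_n;t)=0$, contradicting (i). Hence $sg_1(J_n)=1$. The main obstacle is the bookkeeping in the writhe computation: correctly identifying, for each of the $2n$ crossings, which strand lies on which boundary component of the annulus and reading off the resulting winding number $\alpha_i\cdot\beta_i$. Once this is done accurately for the repeating pattern in the figure, everything else in the lemma reduces to mechanical application of Lemmas~\ref{lem33}, \ref{lem36}, and \ref{lem37}.
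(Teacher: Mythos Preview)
Your overall strategy is exactly the paper's: establish $sg_2(J_n)=0$ from the annulus realization, compute $W_0$, and then read off everything else from Lemmas~\ref{lem33}, \ref{lem36}, and~\ref{lem37}. The one place where your sketch diverges is the bookkeeping you anticipate for $W_0$: you expect one type-$0$ crossing in each pair $(c_{2k-1},c_{2k})$ with some telescoping cancellation, but in fact $D_n$ has a \emph{single} type-$0$ crossing, namely $c_1$, with $\varepsilon_1=1$ and $\alpha_1\cdot\beta_1=n$; the paper records exactly $I_0(D_n)=\{1\}$, so $W_0(J_n;t)=t^n-1$ is a one-term sum and the ``main obstacle'' you flag dissolves. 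With that correction your argument is the paper's argument.
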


\begin{proof}
Since the diagram $D_n$ has a surface realization 
on the annulus $\Sigma_{0,2}$ as shown at the bottom of Figure~\ref{fig:characterization-w}, 
we have $sg_2(J_n)=0$. 
Moreover, since it hols that 
\[I_0(D_n)=\{1\}, \ \e_1=1, \mbox{ and } \alpha_1\cdot\beta_1=n,\]
we have 
$W_0(J_n;t)=t^n-1$, and hence $sg_1(J_n)=1$. 
The remaining assertions follow immediately from Lemma~\ref{lem36}. 
\end{proof}

We define a set of Laurent polynomials by 
\[
\mathcal{P}=\{W_0(K;t)
\mid \mbox{$K$: a long virtual knot with $sg_{2}(K)=0$}\}. 
\]

\begin{lemma}\label{lem64}
For any Laurent polynomials $f(t)$ and $g(t)\in\mathcal{P}$, 
we have 
\begin{enumerate}
\item
$-f(t)\in\mathcal{P}$, 
\item 
$f(t^{-1})\in\mathcal{P}$, and 
\item 
$f(t)+g(t)\in\mathcal{P}$. 
\end{enumerate}
\end{lemma}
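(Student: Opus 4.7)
The plan is to prove each closure property by constructing, from long virtual knots $K, K'$ that realize $f, g$ with $sg_{2} = 0$, a new long virtual knot realizing the desired Laurent polynomial and still having $sg_{2} = 0$. The three tools available are the operations $K \mapsto K^{\#}$, $K \mapsto K^{*}$, and $(K, K') \mapsto K \circ K'$ from Section~\ref{sec2}, each of which respects the $2$-supporting genus. The key identities I will combine are Lemma~\ref{lem22}, Lemma~\ref{lem24}(i), Lemma~\ref{lem36}(i), and Remark~\ref{rem39}(ii).

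For (i), I would take $K^{\#}$. Lemma~\ref{lem22}(i) gives $W_{0}(K^{\#};t) = -W_{1}(K;t)$, and Lemma~\ref{lem36}(i) forces $W_{1}(K;t) = W_{0}(K;t) = f(t)$ because $sg_{2}(K) = 0$; together, $W_{0}(K^{\#};t) = -f(t)$. Since switching over/under at every real crossing does not alter the underlying surface realization, $sg_{2}(K^{\#}) = sg_{2}(K) = 0$, so $-f(t) \in \mathcal{P}$.

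For (ii), I would iterate, applying $K \mapsto K^{*}$ to the knot produced in (i). If $K_{1}$ satisfies $W_{0}(K_{1};t) = -f(t)$ and $sg_{2}(K_{1}) = 0$, then Lemma~\ref{lem22}(iii) yields $W_{0}(K_{1}^{*};t) = -W_{0}(K_{1};t^{-1}) = f(t^{-1})$, and $sg_{2}(K_{1}^{*}) = sg_{2}(K_{1}) = 0$ since an orientation-reversing homeomorphism of $\Sigma_{g,2}$ does not change the genus.

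For (iii), I would form the product $K \circ K'$. Lemma~\ref{lem24}(i) gives $W_{0}(K \circ K';t) = f(t) + g(t)$, while Remark~\ref{rem39}(ii) gives $sg_{2}(K \circ K') \leq sg_{2}(K) + sg_{2}(K') = 0$. No step presents a substantive obstacle, since every ingredient has already been established; the only small subtlety is that no single one of the three involutive operations yields $f(t^{-1})$ directly, so in (ii) one must compose $K \mapsto K^{\#}$ with $K \mapsto K^{*}$ to cancel the sign introduced by each.
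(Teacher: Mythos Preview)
Your proposal is correct and follows essentially the same approach as the paper: for (i) you use $K^{\#}$ together with Lemmas~\ref{lem22}(i) and~\ref{lem36}(i); for (ii) you use $K^{\#*}$ (presented as applying $*$ to the knot from (i)) via Lemma~\ref{lem22}(iii); and for (iii) you use $K\circ K'$ with Lemma~\ref{lem24}(i) and the subadditivity of $sg_{2}$. These are exactly the constructions and citations in the paper's proof.
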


\begin{proof}
Let $K$ and $K'$ be long virtual knots such that 
\[W_{0}(K;t)=f(t), \ W_{0}(K';t)=g(t),\mbox{ and }
sg_{2}(K)=sg_{2}(K')=0.\]

(i) By Lemmas~\ref{lem22}(i) and \ref{lem36}(i), it holds that  
\[W_0(K^\#;t)=-W_1(K;t)=-W_0(K;t)=-f(t).\]
Since $sg_2(K^\#)=sg_2(K)=0$, 
we have $-f(t)\in\mathcal{P}$. 

(ii) By Lemmas~\ref{lem22}(i), (iii), and \ref{lem36}(i), 
it holds that  
\[W_{0}(K^{\#*};t)=W_{1}(K;t^{-1})=W_{0}(K;t^{-1})=f(t^{-1}). \]
Since $sg_2(K^{\#*})=sg_2(K)=0$, 
we have $f(t^{-1})\in\mathcal{P}$. 

(iii) 
By Lemma~\ref{lem24}(i), 
it holds that
\[W_0(K\circ K';t)=W_0(K;t)+W_0(K';t)=f(t)+g(t).\]
Since $sg_2(K\circ K')\leq sg_2(K)+sg_2(K')=0$, 
we have 
$f(t)+g(t)\in\mathcal{P}$. 
\end{proof}

\begin{proof}[Proof of {\rm Proposition~\ref{prop62}}.] 
The implication (i)$\Rightarrow$(ii) follows from $W_0(K;1)=0$. 
We prove (ii)$\Rightarrow$(i). 
Since $f(1)=0$, we can write 
\[f(t)=\sum_{k\ne 0}c_k(t^k-1)\]
for some integers $c_k$. 
Since $t^k-1\in\mathcal{P}$ $(k>0)$ by Lemma~\ref{lem63}(i), 
we have $f(t)\in\mathcal{P}$ by Lemma~\ref{lem64}. 

Since $W_0(K;t)=W_1(K;t)$ by Lemma~\ref{lem36}(i), 
the same characterization holds for $W_1$ as well. 
\end{proof}

\section{Characterization of 
the intersection polynomials}\label{sec7}

In this section, 
we address the realizability problem for the intersection polynomials of long virtual knots, 
and establish a complete characterization for each of 
the twelve polynomials.  
We first focus on the polynomials 
$F_{00}$, $F_{11}$, $H_{00}$, and $H_{11}$. 

\begin{theorem}\label{thm71}
Let $f(t)\in{\Z}[t,t^{-1}]$ be a Laurent polynomial. 
Then the following are equivalent. 
\begin{enumerate}
\item
There eixsts a long virtual knot $K$ 
with $F_{00}(K;t)=f(t)$. 
\item
There eixsts a long virtual knot $K'$ 
with $F_{00}(K';t)=f(t)$ and $sg_1(K')\leq 1$. 
\item
$f(1)=0$ and $f(t)=f(t^{-1})$. 
\end{enumerate}
The same equivalence holds for 
$F_{11}$, $H_{00}$, and $H_{11}$. 
\end{theorem}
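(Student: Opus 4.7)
The plan is to prove the three equivalences in the order (ii)$\Rightarrow$(i)$\Rightarrow$(iii)$\Rightarrow$(ii), with the third being the substantive step.

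The implication (ii)$\Rightarrow$(i) is immediate. For (i)$\Rightarrow$(iii), each term $\varepsilon_i\varepsilon_j(t^{\alpha_i\cdot\alpha_j}-1)$ in the defining sum of $F_{00}$ vanishes at $t=1$, so $F_{00}(K;1)=0$, and reciprocality is given by Lemma~\ref{lem21}(ii). The same reasoning handles $F_{11}$, $H_{00}$, and $H_{11}$; for the $H$-polynomials one uses $W_a(K;1)=0$ to kill the correction terms $\omega_a(D)\cdot W_b(K;t)$ at $t = 1$.

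The core step is (iii)$\Rightarrow$(ii) for $F_{00}$. Since $f$ is reciprocal with $f(1)=0$, it admits a finite decomposition $f(t)=\sum_{k\geq 1}a_k(t^k+t^{-k}-2)$ with $a_k\in\mathbb{Z}$. For each $k\geq 1$ and each sign, I would construct a long virtual knot $\tilde K_k^{\pm}$ with $sg_1(\tilde K_k^{\pm})\leq 1$ whose diagram has exactly two type-$0$ real crossings $c_1,c_2$ with $\varepsilon_1\varepsilon_2=\pm 1$ and $\alpha_1\cdot\alpha_2=k$; a direct computation then gives $F_{00}(\tilde K_k^{\pm};t)=\pm(t^k+t^{-k}-2)$. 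Such knots arise as natural generalizations of the knots $K_n'$ in the proof of Theorem~\ref{thm38}(ii), modified so that one of the cycles winds $k$ times around a generator of $H_1(\Sigma_1)$ rather than once.

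The main obstacle will be assembling these elementary blocks into a single long virtual knot with $sg_1\leq 1$ realizing the full polynomial $f$. Naive concatenation gives only $sg_1\leq 2$ by Remark~\ref{rem39}(i), so I plan to place all required pairs of type-$0$ crossings into a single torus diagram, choosing homology classes and signs so that the cross-terms between distinct pairs cancel and only the intended contributions survive in $F_{00}$. Alternatively, this assembly step can be carried out by iterated use of Theorem~\ref{thm43}(ii) with simply linked tangles $T$ satisfying $\lambda_0(T)=\lambda_1(T)=0$, for which $F_{ab}(T+K;t)=F_{ab}(R(T);t)+F_{ab}(K;t)$, provided the sum operation is realized while preserving $sg_1\leq 1$.

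The remaining three cases reduce to $F_{00}$ by symmetry. Lemma~\ref{lem23}(i) gives $F_{11}(K^\#;t)=F_{00}(K;t)$ with $sg_1(K^\#)=sg_1(K)$, so the characterization transfers immediately to $F_{11}$. For $H_{00}$ (respectively $H_{11}$), starting from a knot $K$ realizing $f$ as $F_{11}(K;t)$ (respectively $F_{00}(K;t)$) with $sg_1(K)\leq 1$, Proposition~\ref{prop54} produces $K'$ with $H_{00}(K';t)=F_{11}(K;t)=f(t)$ (respectively $H_{11}(K';t)=F_{00}(K;t)=f(t)$) and $sg_1(K')\leq sg_1(K)\leq 1$, completing the characterization.
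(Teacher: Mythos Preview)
Your overall plan and the reductions for $F_{11}$, $H_{00}$, $H_{11}$ via Lemma~\ref{lem23}(i) and Proposition~\ref{prop54} match the paper exactly. The genuine gap is in the core step (iii)$\Rightarrow$(ii) for $F_{00}$: you correctly identify that concatenating your building blocks $\tilde K_k^{\pm}$ does not control $sg_1$, but neither of your proposed fixes is carried out. The single-torus assembly would require arranging many type-$0$ cycles $\alpha_i$ on $\Sigma_1$ so that the off-diagonal intersection numbers cancel in the double sum while the diagonal pairs give the prescribed $k$'s; on a genus-one surface the intersection form is rank~$2$ symplectic, and it is not at all clear such a configuration exists for an arbitrary target $f$. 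Your second alternative, using simply linked tangles with $\lambda_0(T)=\lambda_1(T)=0$, reduces by Theorem~\ref{thm43}(ii) to plain additivity of $F_{00}$, which is exactly the concatenation behavior you already know fails to bound $sg_1$.

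The paper sidesteps the assembly problem entirely by a different mechanism: rather than building $f$ out of $F_{00}$-contributions, it writes $f(t)=g(t)+g(t^{-1})$ with $g(1)=0$, realizes $g$ as $W_0(K;t)$ for a knot $K$ with $sg_2(K)=0$ (Proposition~\ref{prop62}, so $F_{00}(K;t)=0$ by Lemma~\ref{lem36}), and then uses a single tangle $T_1$ with $\lambda_0(T_1)=1$. The point is that a \emph{nonzero} linking number in Theorem~\ref{thm43}(ii) converts the writhe polynomial of $K$ into an $F_{00}$-contribution: $F_{00}(T_1+K;t)=W_0(K;t)+W_0(K;t^{-1})=f(t)$. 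Because $sg_2(K)=0$, the sum $T_1+K$ sits on a single torus, giving $sg_1\leq 1$ in one stroke. The idea you are missing is to load the content of $f$ into $W_0$ rather than into $F_{00}$, and to use $\lambda_0\neq 0$ as the transfer mechanism.
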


\begin{proof}
The implication (ii)$\Rightarrow$(i) is trivial, 
and (i)$\Rightarrow$(iii) follows from Lemma~\ref{lem21}(ii). 

We prove (iii)$\Rightarrow$(ii). 
Since $f(1)=0$ and $f(t)=f(t^{-1})$, 
we can write 
\[f(t)=g(t)+g(t^{-1})\]
for some $g(t)\in{\Z}[t,t^{-1}]$ with $g(1)=0$; 
in fact, there are integers $c_k$ such that 
$f(t)=\sum_{k>0} c_k(t^k-2+t^{-k})$, 
and we may take $g(t)=\sum_{k>0}c_k(t^k-1)$. 
Since $g(1)=0$, 
there exists a long virtual knot $K$ such that 
$W_{0}(K;t)=g(t)$ and $sg_2(K)=0$ 
by Proposition~\ref{prop62}. 
Moreover, $K$ satisfies $F_{00}(K;t)=0$ 
by Lemma~\ref{lem36}(ii).

Let $T_1$ be the virtual tangle shown on the left of Figure~\ref{fig:t1}. 
Then we have 
\[F_{00}(R(T_1);t)=0 \mbox{ and } \lambda_0(T_1)=1.\]
Therefore, it follows from Theorem~\ref{thm43}(ii) that 
\[F_{00}(T_1+K)=F_{00}(K;t)+W_0(K;t)+W_0(K;t^{-1})
=0+g(t)+g(t^{-1})=f(t).\]
Moreover, since $sg_2(K)=0$, 
the right of the figure shows that $sg_1(T_1+K)\leq 1$. 

\begin{figure}[htbp]
  \centering
    \begin{overpic}[]{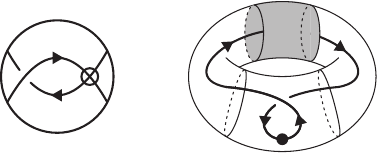}
      \put(-25.5,33){$T_{1}=$}
      \put(133,55){$K$}
      \put(132,-12){$T_{1}$}
    \end{overpic}
  \vspace{1em}
  \caption{The tangle $T_{1}$ and a surface realization of $T_{1}+K$}
  \label{fig:t1}
\end{figure}

For $F_{11}$, since it follows from Lemma~\ref{lem23}(i) that 
\begin{align*}
&F_{11}((T_1+K)^\#;t)=F_{00}(T_1+K;t)=f(t)
\mbox{ and }\\
&sg_1((T_1+K)^\#)=sg_1(T_1+K)\leq 1,
\end{align*}
the same characterization holds for $F_{11}$ as well. 
For $H_{00}$ and $H_{11}$, 
by Lemma~\ref{lem23}(i) and Proposition~\ref{prop54}(ii) and (iii), 
there is a long virtual knot $K'$ such that 
\begin{align*}
&H_{00}(K'^\#;t)=H_{11}(K';t)=F_{00}(T_1+K;t)=f(t)\mbox{ and}\\
&sg_1(K'^\#)=sg_1(K')\leq sg_1(T_1+K)\leq 1.
\end{align*} 
Therefore, the same characterization holds for 
$H_{00}$ and $H_{11}$. 
\end{proof}

We remark that, if $f(t)\ne 0$, then any long virtual knot $K'$ in Theorem~\ref{thm71}(ii) 
satisfies $sg_1(K')=1$; in fact, $K'$ is nonclassical. 

The next theorem gives a characterization 
for $G_{00}$ and $G_{11}$.

\begin{theorem}\label{thm72}
Let $f(t)\in{\Z}[t,t^{-1}]$ be a Laurent polynomial. 
Then the following are equivalent. 
\begin{enumerate}
\item
There eixsts a long virtual knot $K$ 
with $G_{00}(K;t)=f(t)$. 
\item
There eixsts a long virtual knot $K'$ 
with $G_{00}(K';t)=f(t)$ and $sg_1(K')\leq 1$. 
\item
$f(1)=f'(1)=0$. 
\end{enumerate}
The same equivalence holds for $G_{11}$. 
\end{theorem}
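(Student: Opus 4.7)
The plan follows the architecture used for Theorem~\ref{thm71}. The implication (ii)$\Rightarrow$(i) is immediate, and (i)$\Rightarrow$(iii) follows from the definitional identity $G_{00}(K;1)=0$ together with Lemma~\ref{lem21}(iii), which gives $G_{00}'(K;1)=0$.

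For the substantive direction (iii)$\Rightarrow$(ii), observe that $f(1)=f'(1)=0$ is equivalent to $(t-1)^{2}\mid f(t)$, so we may write $f(t)=(t-1)h(t)$ for some $h(t)\in\Z[t,t^{-1}]$ with $h(1)=0$. Proposition~\ref{prop62} then supplies a long virtual knot $K$ with $W_{0}(K;t)=h(t)$ and $sg_{2}(K)=0$, which by Lemma~\ref{lem36}(ii) automatically satisfies $G_{00}(K;t)=0$. The remaining task is to exhibit a virtual $2$-string tangle $T_{2}$ whose contribution through the sum $T_{2}+K$ multiplies $W_{0}(K;t)$ by the universal factor $t-1$.

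Define $T_{2}$ by a diagram with a single real crossing of type $(A,B)$ and sign $+1$, realized on a $2$-punctured surface bearing a handle so that the cycles at the unique crossing in the right closure $R(T_{2})$ satisfy $\alpha_{1}\cdot\beta_{1}=1$. A direct computation from the definitions then yields
\[V_{0}(T_{2};t)=t,\quad \lambda_{0}(T_{2})=1,\quad\text{and}\quad G_{00}(R(T_{2});t)=(t-1)-1\cdot(t-1)=0.\]
Substituting into Theorem~\ref{thm43}(iii) with $a=b=0$ gives
\[G_{00}(T_{2}+K;t)=0+0+(V_{0}(T_{2};t)-\lambda_{0}(T_{2}))W_{0}(K;t)=(t-1)h(t)=f(t),\]
and a surface realization analogous to that of $T_{1}+K$ in the proof of Theorem~\ref{thm71} shows $sg_{1}(T_{2}+K)\leq 1$. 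For $G_{11}$, setting $K'=(T_{2}+K)^{\#}$ and invoking Lemma~\ref{lem23}(i) produces $G_{11}(K';t)=G_{00}(T_{2}+K;t)=f(t)$ with $sg_{1}(K')=sg_{1}(T_{2}+K)\leq 1$.

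The main obstacle is the explicit construction of the tangle $T_{2}$: one must design a $2$-punctured surface with a handle carrying a diagram whose unique crossing produces intersection number exactly $1$, and then verify that the sum $T_{2}+K$ can be drawn on a single torus when $sg_{2}(K)=0$. Once this geometric input is in place, the rest of the argument is a mechanical substitution into Theorem~\ref{thm43}(iii), mirroring the scheme of Theorem~\ref{thm71}.
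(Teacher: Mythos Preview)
Your proposal is correct and follows essentially the same architecture as the paper's proof. The only cosmetic difference is the choice of factorization and tangle: you write $f(t)=(t-1)h(t)$ and use a tangle with a single positive $(A,B)$-crossing of index $+1$ (so $V_0=t$, $\lambda_0=1$), whereas the paper writes $f(t)=(1-t^{-1})g(t)$ and uses a tangle $T_2$ with a single negative $(A,B)$-crossing of index $-1$ (so $V_0=-t^{-1}$, $\lambda_0=-1$). Your tangle is simply the mirror $T_2^{*}$ of the paper's, and the two factorizations are related by $g(t)=t\,h(t)$; both plug into Theorem~\ref{thm43}(iii) identically.

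Regarding the ``main obstacle'' you flag at the end: the paper resolves it by an explicit picture (Figure~\ref{fig:t2}) showing $T_2+K$ drawn on a single torus with $K$ occupying an annular region, so no separate handle is needed for the tangle---the nontrivial index of the crossing arises from how the tangle strands wind relative to the annulus carrying $K$. Your description of the tangle as living on ``a $2$-punctured surface bearing a handle'' is slightly misleading on this point, but the intended construction (and the resulting bound $sg_1(T_2+K)\le 1$) goes through for your mirror tangle exactly as for the paper's.
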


\begin{proof} 
The implication (ii)$\Rightarrow$(i) is trivial, 
and (i)$\Rightarrow$(iii) follows from Lemma~\ref{lem21}(iii). 

We prove (iii)$\Rightarrow$(ii). 
Since $f(1)=f'(1)=0$, 
we can write 
\[f(t)=(1-t^{-1})g(t)\] for some $g(t)\in{\Z}[t,t^{-1}]$ 
with $g(1)=0$. 
Then there exists a long virtual knot $K$ such that 
$W_{0}(K;t)=g(t)$ and $sg_2(K)=0$ 
by Proposition~\ref{prop62}. 
Moreover, $K$ satisfies $G_{00}(K;t)=0$ 
by Lemma~\ref{lem36}(ii). 

Let $T_2$ be the virtual tangle shown on the left of Figure~\ref{fig:t2}. 
Then we have 
\[G_{00}(R(T_2);t)=0, \ \lambda_0(T_2)=-1, 
\mbox{ and }V_0(T_2;t)=-t^{-1}.\]
Therefore, it follows from Theorem~\ref{thm43}(iii) that 
\[G_{00}(T_2+K)=G_{00}(K;t)+(1-t^{-1})W_0(K;t)=0+(1-t^{-1})g(t)=f(t).\]
Moreover, since $sg_2(K)=0$, 
the right of the figure shows that $sg_1(T_2+K)\leq 1$. 

\begin{figure}[htbp]
  \centering
    \begin{overpic}[]{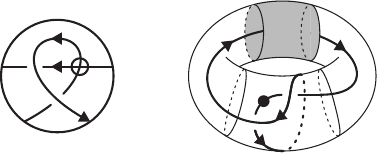}
      \put(-25.5,33){$T_{2}=$}
      \put(133,55){$K$}
      \put(132,-12){$T_{2}$}
    \end{overpic}
  \vspace{1em}
  \caption{The tangle $T_{2}$ and a surface realization of $T_{2}+K$}
  \label{fig:t2}
\end{figure}

For $G_{11}$, since it follows from Lemma~\ref{lem23}(i) that 
\begin{align*}
&G_{11}((T_2+K)^\#;t)=G_{00}(T_2+K;t)=f(t)
\mbox{ and }\\
&sg_1((T_2+K)^\#)=sg_1(T_2+K)\leq 1,
\end{align*}
the same characterization holds for $G_{11}$ as well. 
\end{proof}

\begin{remark}\label{rem73}
(i) In the proof of Theorem~\ref{thm72}, 
we have $G_{00}(R(T_2);t)=0$, 
which can be generalized as follows. 
Suppose that a long virtual knot $K$ has a diagram 
where a real crossing $c_1$ is of type $0$, 
and the others are of type $1$. 
Then we have  
\begin{align*}
F_{00}(K;t)&=\e_1^2(t^{\alpha_1\cdot\alpha_1}-1)=0 \mbox{ and}\\
G_{00}(K;t)&=
\e_1^2(t^{\alpha_1\cdot\beta_1}-1)
-\e_1\cdot \e_1(t^{\alpha_1\cdot\beta_1}-1)=0.
\end{align*}

\noindent
(ii) If $f(t)\ne 0$, then any knot $K'$ in Theorem~\ref{thm72}(ii) 
satisfies $sg_1(K')=1$. 

\noindent
(iii) For (closed) virtual knots, 
the same characterization as in Theorem~\ref{thm72} also holds 
for the writhe polynomial~\cite{ST} and 
for the first intersection polynomial~\cite{HNNS3}. 
\end{remark}

We conclude this paper by giving 
a characterization of the remaining intersection polynomials 
as follows. 

\begin{theorem}\label{thm74}
Let $f(t)\in{\Z}[t,t^{-1}]$ be a Laurent polynomial. 
Then the following are equivalent. 
\begin{enumerate}
\item
There eixsts a long virtual knot $K$ 
with $F_{01}(K;t)=f(t)$. 
\item
There eixsts a long virtual knot $K'$ 
with $F_{01}(K';t)=f(t)$ and $sg_1(K')\leq 1$. 
\item
$f(1)=0$. 
\end{enumerate}
The same equivalence holds for 
$F_{10}$, $G_{01}$, $G_{10}$, 
$H_{01}$ and $H_{10}$. 
\end{theorem}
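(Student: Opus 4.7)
The implication (ii) $\Rightarrow$ (i) is trivial, and (i) $\Rightarrow$ (iii) follows by evaluating at $t=1$: each defining sum is a telescope of $(t^{n}-1)$ terms, and the correction terms appearing in $G$ and $H$ also vanish since $W_{a}(K;1)=0$.

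For (iii) $\Rightarrow$ (ii), my plan is to construct realizations directly for $F_{01}$ and $G_{01}$, and then to obtain the other four polynomials via symmetries. I would proceed exactly as in Theorems~\ref{thm71} and~\ref{thm72}: by Proposition~\ref{prop62}, since $f(1)=0$, there exists a long virtual knot $K_{0}$ with $W_{0}(K_{0};t)=f(t)$ and $sg_{2}(K_{0})=0$, so Lemma~\ref{lem36} yields $W_{1}(K_{0};t)=f(t)$ and $F_{01}(K_{0};t)=G_{01}(K_{0};t)=0$. For $F_{01}$, I would take a virtual $2$-string tangle $T_{3}$ consisting of a single positive real crossing in which $B$ passes over $A$, arranged so that this crossing has type $1$ in $R(T_{3})$; this yields $\lambda_{0}(T_{3})=0$, $\lambda_{1}(T_{3})=1$, and $F_{01}(R(T_{3});t)=0$ because $I_{0}(R(T_{3}))=\emptyset$. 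Theorem~\ref{thm43}(ii) then gives
\[
F_{01}(T_{3}+K_{0};t)
=0+0+1\cdot f(t)+0\cdot f(t^{-1})
=f(t).
\]
For $G_{01}$, I would use a tangle $T_{4}$ with a single positive $(A,B)$ crossing drawn on a disk (so that $\alpha_{1}\cdot\beta_{1}=0$) arranged to have type $0$ in $R(T_{4})$; this produces $V_{0}(T_{4};t)=1$, $\lambda_{1}(T_{4})=0$, and $G_{01}(R(T_{4});t)=0$, so Theorem~\ref{thm43}(iii) yields $G_{01}(T_{4}+K_{0};t)=f(t)$. In both cases the same surface-realization argument used for $T_{1}+K$ and $T_{2}+K$ in the earlier proofs establishes $sg_{1}(T_{i}+K_{0})\leq 1$.

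The remaining four polynomials follow from the symmetries collected in Section~\ref{sec2}. Lemma~\ref{lem23}(i) gives $F_{10}(K^\#;t)=F_{01}(K;t)$ and $G_{10}(K^\#;t)=G_{01}(K;t)$ while preserving $sg_{1}$, so the previous realizations immediately yield $F_{10}$ and $G_{10}$. Proposition~\ref{prop54}, applied to the long virtual knots realizing $F_{10}$ and $F_{01}$, produces long virtual knots $K'$ with $H_{01}(K';t)=F_{10}(K;t)=f(t)$ and $H_{10}(K';t)=F_{01}(K;t)=f(t)$, respectively, with $sg_{1}(K')\leq sg_{1}(K)\leq 1$. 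The main obstacle will be verifying the precise properties of the tangles $T_{3}$ and $T_{4}$---fixing sign, type, and surface-realization conventions so that $\lambda_{a}(T)$, $V_{0}(T;t)$, $F_{01}(R(T);t)$, and $G_{01}(R(T);t)$ take exactly the required values---but this is the same kind of bookkeeping already carried out for $T_{1}$ and $T_{2}$ in the preceding theorems, and should be routine.
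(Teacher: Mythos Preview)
Your proposal is correct and follows essentially the same approach as the paper: realize $F_{01}$ and $G_{01}$ via Theorem~\ref{thm43} applied to $T+K_0$ for a suitable tangle $T$ and a knot $K_0$ with $W_0(K_0;t)=f(t)$, $sg_2(K_0)=0$, then obtain the other four polynomials using Lemma~\ref{lem23}(i) and Proposition~\ref{prop54}. The only difference is your specific choice of $T_4$ for $G_{01}$: the paper uses a tangle with $\lambda_1(T_4)=-1$ and $V_0(T_4;t)=0$, whereas you use one with $\lambda_1(T_4)=0$ and $V_0(T_4;t)=1$; both choices feed Theorem~\ref{thm43}(iii) to give $G_{01}(T_4+K_0;t)=f(t)$, so this is a harmless variation.
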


\begin{proof}
The implications (ii)$\Rightarrow$(i) 
and (i)$\Rightarrow$(iii) are trivial. 

We first prove (iii)$\Rightarrow$(ii) for $F_{01}$ and $G_{01}$. 
Since $f(1)=0$, 
there exists a long virtual knot $K$ such that 
$W_{0}(K;t)=f(t)$ and $sg_2(K)=0$ 
by Proposition~\ref{prop62}. 
Moreover, $K$ satisfies $F_{01}(K;t)=G_{01}(K;t)=0$ 
by Lemma~\ref{lem36}(ii). 

Let $T_3$ and $T_4$ be the virtual tangles as shown in Figure~\ref{fig:t3t4}. 
Then we have 
\begin{align*}
& F_{01}(R(T_3);t)=0, \ \lambda_0(T_3)=0, \ \lambda_1(T_3)=1, \\
& G_{01}(R(T_4);t)=0, \ \lambda_1(T_4)=-1, \mbox{ and }
V_0(T_4;t)=0.
\end{align*}
Therefore, it follows from Theorem~\ref{thm43}(ii) and (iii) that 
\begin{align*}
& F_{01}(T_3+K;t)=F_{01}(K;t)+W_0(K;t)=0+f(t)=f(t) \mbox{ and}\\
& G_{01}(T_4+K;t)=G_{01}(K;t)+W_0(K;t)=0+f(t)=f(t). 
\end{align*}
Moreover, since $sg_2(K)=0$, we have 
$sg_1(T_i+K)\leq 1$ $(i=3,4)$. 

Similarly to the proof of Theorem~\ref{thm71}, 
the same characterization holds for 
the remaining polynomials $F_{10}$, $G_{10}$, $H_{01}$, and $H_{10}$. 
\end{proof} 

\begin{figure}[htbp]
  \centering
    \begin{overpic}[]{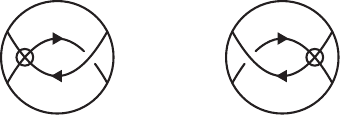}
      \put(24,-12){$T_{3}$}
      \put(132,-12){$T_{4}$}
    \end{overpic}
  \vspace{1em}
  \caption{The tangles $T_{3}$ and $T_{4}$}
  \label{fig:t3t4}
\end{figure}

We remark that, if $f(t)\ne 0$, then any long virtual knot $K'$ in Theorem~\ref{thm74}(ii) 
satisfies $sg_1(K')=1$. 



\end{document}